\documentclass[11pt,a4paper]{article}

\usepackage[naustrian, english]{babel} 
\usepackage[T1]{fontenc}
\usepackage{lmodern}
\usepackage{textcomp}
\usepackage{amssymb} 
\usepackage{amsthm}
\usepackage{mathtools}
\usepackage[pdftex]{graphicx}
\usepackage[export]{adjustbox}
\usepackage{caption}
\usepackage{color}
\usepackage[arrow, matrix, curve]{xy}
\usepackage{lmodern}
\usepackage[utf8]{inputenc}
\usepackage{CJKutf8}
\usepackage{url}
\usepackage{hyperref}
\usepackage{float}
\usepackage{chngcntr}
\usepackage{listings}
\usepackage{appendix}
\usepackage{pdfpages}
\usepackage{algorithm}
\usepackage{algorithmic}
\usepackage{geometry}
\usepackage{bbm}
\usepackage{bbold}
\usepackage{graphicx}
\usepackage{caption}
\usepackage{subcaption}
\usepackage{xcolor}
\usepackage{xargs} 
\usepackage[colorinlistoftodos,textsize=small]{todonotes}

\geometry{a4paper, top=40mm, left=20mm, right=20mm, bottom=30mm,
	headsep=10mm, footskip=12mm}

\definecolor{codegreen}{rgb}{0,0.6,0}
\definecolor{codegray}{rgb}{0.5,0.5,0.5}
\definecolor{codepurple}{rgb}{0.58,0,0.82}

\lstdefinestyle{mystyle}{
	basicstyle=\tiny,
	commentstyle=\color{codegreen},
	keywordstyle=\color{blue},
	numberstyle=\tiny\color{codegray},
	stringstyle=\color{codepurple},
	breakatwhitespace=false,         
	breaklines=true,                 
	captionpos=b,                    
	keepspaces=true,                 
	numbers=left,                    
	numbersep=5pt,                  
	showspaces=false,                
	showstringspaces=false,
	showtabs=false,                  
	tabsize=2,
}

\lstset{style=mystyle}

\newcommand{\R}{\mathbb{R}}

\newcommand{\pa}{\partial}

\newcommand{\dt}{\frac{d}{dt}}

\newcommand{\ve}{\varepsilon}

\newcommand{\vp}{\varphi}

\newcommand{\n}{\mathcal{N}}

\newcommand{\md}{\,\mathrm{d}}

\newcommand{\vpa}{\varphi_\ast}

\newcommand{\T}{\mathbb{T}^1}
\newcommand{\Tt}{\mathbb{T}^2}

\newcommand{\TAin}{\mathbb{T}^{AL}_{\to\vp}}

\newcommand{\TRout}{\mathbb{T}^{REV}_{\vp}}

\newcommand{\h}{\mathcal{H}}

\newcommand{\xia}{\xi_{\ast}}

\newcommand{\mua}{\mu^{\ast}}
\newcommand{\af}{f^{\ast}}

\newtheorem{theorem}{Theorem}
\newtheorem{lemma}[theorem]{Lemma}
\newtheorem{cor}[theorem]{Corollary}
\theoremstyle{definition}
  
\newtheorem{rem}{Remark}

\newcommandx{\unsure}[2][1=]{\todo[linecolor=red,backgroundcolor=red!25,bordercolor=red,#1]{#2}}
\newcommandx{\change}[2][1=]{\todo[linecolor=blue,backgroundcolor=blue!25,bordercolor=blue,#1]{#2}}
\newcommandx{\info}[2][1=]{\todo[linecolor=green,backgroundcolor=green!25,bordercolor=green,#1]{#2}}
\newcommandx{\improvement}[2][1=]{\todo[linecolor=yellow,backgroundcolor=yellow!25,bordercolor=yellow,#1]{#2}}

\newcommandx{\biblio}[2][1=]{\todo[linecolor=blue,backgroundcolor=magenta!25,bordercolor=blue,#1]{#2}}
\newcommandx{\laura}[2][1=]{\todo[linecolor=violet,backgroundcolor=violet!25,bordercolor=violet,#1]{#2}}
\newcommandx{\michi}[2][1=]{\todo[linecolor=purple,backgroundcolor=red!40,bordercolor=purple,#1]{#2}}

\geometry{a4paper, top=40mm, left=20mm, right=20mm, bottom=30mm,
	headsep=10mm, footskip=12mm}

\begin{document}
	\title{Kinetic Model for Myxobacteria with Directional Diffusion}
	\author{L. Kanzler\thanks{Sorbonne Université, Laboratoire Jacques-Louis Lions, F-75005 Paris, France.
			{\tt laura.kanzler@sorbonne-universite.fr}} \and 
		C. Schmeiser\thanks{University of Vienna, Faculty for Mathematics, Oskar-Morgenstern-Platz 1, 1090 Wien, Austria. 
			{\tt christian.schmeiser@univie.ac.at}}}
	\date{\vspace{-5ex}}
	\maketitle
	
	\begin{abstract}
	In this article a  kinetic model for the dynamics of myxobacteria colonies on flat surfaces is investigated. The model is based on the kinetic equation for collective bacteria dynamics introduced in [S. Hittmeir, L. Kanzler, A. Manhart, C. Schmeiser, KRM, 14 (1), pp. 1--24, 2021], which is based on the assumption of hard binary collisions of two different types: alignment and reversal, but extended by additional Brownian forcing in the free flight phase of single bacteria. This results in a diffusion term in velocity direction at the level of the kinetic equation, which opposes the concentrating effect of the alignment operator. A global existence and uniqueness result as well as exponential decay to uniform equilibrium is proved in the case where the diffusion is large enough compared to the total bacteria mass. Further, the question wether in a small diffusion regime nonuniform stable equilibria exist is positively answered by performing a formal bifurcation analysis, which revealed the occurrence of a pitchfork bifurcation. These results are illustrated by numerical simulations.
	\end{abstract}
	
	\begin{keywords}
		myxobacteria, inelastic Boltzmann equation, hypocoercivity, entropy, bifurcation, small diffusion parameter, fixed-point, decay to equilibrium
	\end{keywords}\medskip

	\textbf{\textit{AMS subject classification:}} 35Q20, 35B40, 35B32\\
	
 {\it Acknowledgements:} This work has been supported by the Austrian Science Fund, grants no. W1245 and F65. 
				
	\section{Introduction}
	The aim of this work is to investigate a model for the dynamics of myxobacteria colonies moving on flat substrates. 	
	The equation of interest is the kinetic transport equation 
	\begin{equation}\label{model}
	\pa_t f + \omega(\vp) \cdot \nabla_x f = \mu \pa_{\vp}^2 f+ Q(f,f).
	\end{equation}	
	for the distribution function $f(x,\vp,t)\geq0$, where $x \in \Tt, \vp \in \T$ and $t \geq0$ denote position, the directional angle, and time, respectively. We consider the collision operator $Q$ introduced in \cite{ourpaper} and extend the model by a diffusion term with diffusivity $\mu>0$ in the angular direction.
Under the assumption of constant speed (normalized to 1), the velocity is given by $\omega(\vp)=(\cos\vp,\sin\vp)$. The notation $\T$ and $\Tt$ is used for the one- and,
respectively, two-dimensional flat tori with $2\pi$-periodicity. The collision operator is of the form
	\begin{align}\label{Q}
	Q(f,g)=2\int_{\TAin}  b(\tilde\vp,\vpa) \tilde f g_* d\vpa  
	+ \int_{\TRout} b(\vp^\downarrow,\vpa^\downarrow) f^\downarrow g_*^\downarrow d\vpa - \int_{\T} b(\vp,\vpa) f g_* 
	d\vpa \,,
	\end{align}
	where
	$$
	\TRout := \left(\vp+\frac{\pi}{2},\vp+\frac{3\pi}{2}\right) \,,\qquad \TAin = \left( \vp-\frac{\pi}{4}, \vp+\frac{\pi}{4}\right) \,,
	$$ 
	and
	$$
	\tilde\vp := 2\vp-\vpa \,,\qquad \vp^\downarrow = \vp+\pi \,,\qquad \vpa^\downarrow = \vpa+\pi\,.
	$$
Super- and subscripts on $f$ and $g$ denote evaluation at $\vp$ with the same super- and subscripts. The model describes movement along trajectories governed by \emph{Brownian motion} in velocity direction, interrupted by hard binary collisions with \emph{collision cross-section} $b(\vp,\vpa)$. Its dependence on the pre-collisional
directions $\vp$ and $\vpa$ is due to the shape of the bacteria. In this work we consider two possible choices: 
	Rod shaped bacteria are described by $b(\vp,\vpa) = |\omega_*\cdot\omega^\bot| = |\sin(\vp-\vpa)|$. On the other hand, bacteria with circular shape yield a collision rate independent from the pre-collisional directions. By analogy to a similar simplification of the gas dynamics Boltzmann equation \cite{Cercignani}, we use the name \textit{'Maxwellian myxos'} for this imagined species, modeled by $b(\vp,\vpa)\equiv 1$. We may note at this point the reflection and rotation symmetries 
$b(\vp, \vpa) = b(\vp^\downarrow, \vpa^\downarrow)$ and, respectively, $b(\vp+\alpha,\vpa+\alpha)= b(\vp,\vpa)$.
The gain terms in \eqref{Q} describe two different types of collisions:
	\begin{itemize}
		\item \textit{Alignment:} $(\tilde\vp,\vpa)\to(\vp,\vp)$ with $\vp = (\tilde\vp+\vpa)/2$, if two myxobacteria moving in directions $\tilde{\vp}$ and $\vpa$ meet at an angle smaller than $\pi/2$. The factor 2 is due to the fact that an alignment collision produces 2 myxobacteria with the same
		direction. The set $\TAin$ describes all angles $\vpa$, which can produce the angle $\vp$ upon collision.
		\item \textit{Reversal:} $(\vp,\vpa)\to(\vp^\downarrow,\vpa^\downarrow)$, if two myxobacteria with directions $\vp$ and $\vpa$ meet at an angle larger than $\pi/2$. The set $\TRout$ describes all angles $\vpa$ such that a collision between $\vp^\downarrow$ and $\vpa^\downarrow$ can produce the angle $\vp$.
	\end{itemize}
Properties of the model without directional diffusion
	\begin{align}\label{myxo}
	\pa_t f + \omega(\vp) \cdot \nabla_x f = Q(f,f) \,,
	\end{align}
introduced and investigated in \cite{ourpaper}, will serve as motivation for the dynamics we expect in \eqref{model} in the small diffusion regime. In both \eqref{model} and \eqref{myxo} the total mass is conserved and denoted by
	$$M:=\int_{\T\times \Tt}f(x,\vp,t) \: \md \vp \md x.$$
	Throughout all of this paper, we denote the \emph{uniform distribution} by
	\begin{align}\label{f0}
	f_0:= \frac{M}{2\pi},
	\end{align}
which defines an equilibrium for both \eqref{model} and \eqref{myxo}. Numerical experiments in the non-diffusive case \cite{ourpaper} suggest instability of $f_0$
and convergence as $t\to\infty$ to an equilibrium measure of the form
	\begin{align}\label{measureequ}
	f_\infty(\vp) :=\rho_+ \delta(\vp-\vp_+) +\rho_-\delta(\vp-\vp_+^\downarrow) \,.
	\end{align}
The convergence can be proved for the spatially homogeneous equation with special initial conditions. These observations give rise to the assumption that for small $\mu$ the uniform equilibrium $f_0$ will also be unstable for \eqref{model} and other equilibria will occur. 
	
	In \cite{BDG1} a similar model, also of Boltzmann-type but just describing alignment interactions, was introduced as binary collision counterpart of the Vicsek model for swarm dynamics, which on the other hand is based on nonlocal alignment interactions between agents \cite{VCBCS}. It was investigated further in \cite{BDG2} as well as in \cite{carlen}, where additionally the case of Brownian forcing between binary interactions was considered. Before, such a diffusive kinetic equation modelling alignment between agents was already introduced and studied in \cite{BK}.
	
Section \ref{sec:ex} of this article is dedicated to proving global existence of a solution subject to initial conditions sufficiently close the uniform equilibrium $f_0$ as well as exponential decay towards this steady state, both under the assumption that the ratio $\mu/M$ is large enough. This result relies on a perturbative approach including the proof of spectral stability of the equilibrium before extending it to the nonlinear framework, close to equilibrium. We want to mention at this point that the theory for the dissipative Boltzmann equation is much less developed than the one of the conservative Boltzmann equation, which is due to the lack of a-priory estimates given by an entropy. Global existence results for the spatially inhomogeneous Cauchy problem in the inelastic case are only known for near vacuum data \cite{alonso} (i.e. the collisions do not have much impact on the dynamics) inspired by the method using Kaniel \& Shinbrot iterates \cite{KS}. More recently in \cite{T} existence in the spatially inhomogeneous framework for inelastic collisions could be established without the closeness to vacuum restriction. Further, theory in the one-dimensional situation, where grazing collisions are almost elastic, can be found in \cite{BP}. Another important work in the one dimensional case has been done in \cite{jabin}, carrying out the rigorous macroscopic limit towards pressureless gas dynamics. Many more results have been established in an homogeneous framework, see e.g. \cite{GPV} and \cite{MMR} for investigations on the existence and uniqueness of solutions. Besides work on the Cauchy problem, a number of results on existence and further properties of self-similar profiles for diffusively excited inelastic hard sphere models have been obtained. Among them to mention \cite{BGP}, \cite{GPV} and \cite{MM1, MM2, MM3} for the case of a constant coefficient of restitution, while we refer the reader to \cite{AL} for considerations on the non-constant case. 
	
	
	In the first part of Section \ref{sec:bif} we investigate stability of the uniform steady state $f_0$ and the existence of nontrivial spatially homogeneous equilibria in dependence of the diffusivity and total mass of the system, using bifurcation theory \cite{cran} via Fourier series expansion. We establish the occurrence of a supercritical pitchfork bifurcation. Although the calculations remain formal, it does provide new insights into the behavior of the model, while being consistent with already existing results. Indeed, in \cite{carlen} a rigorous proof of existence of a pitchfork bifurcation in the noisy version of a Boltzmann-type alignment model for swarming behavior is stated. It is interesting to note that the set of nontrivial equilibria is two-dimensional with two opposite peaks of equal height, as opposed to the non-diffusive case
\eqref{measureequ} where the set of unsymmetric equilibria is three-dimensional. In
the second part of Section \ref{sec:bif} we investigate formal approximations of equilibria for small values of the diffusivity $\mu$. For the case of Maxwellian myxos
we prove the existence of a formal approximation of equilibria, which can be seen as regularizations of \eqref{measureequ} in the case $\rho_+=\rho_-$.
	
In Section \ref{sec:num} we present results of numerical simulations for the spatially homogeneous equation, providing evidence for the bifurcation results of 
Section \ref{sec:bif}.
	
	\section{Decay to the uniform equilibrium}\label{sec:ex}
	
	The aim of this section is to establish existence and uniqueness of solutions of \eqref{model}, as well as asymptotic stability of the uniform equilibrium. 
	This can only be expected under the assumption of large enough diffusivity $\mu$ compared to the total mass $M$. 
	
	\begin{theorem}\label{existence}
		Let $f_I \in H_{x,\vp}^2(\Tt\times\T)$, $f_I\ge 0$, and let $\mu/M$ be large enough with 
		$M=\int_{\Tt\times\T}f_I \md\vp\md x$. Let furthermore $\|f_I-f_0\|_{H_{x,\vp}^2(\Tt\times\T)}$ be small enough with $f_0 = M/(2\pi)$.
		Then equation \eqref{model} subject to the initial condition $f(t=0)=f_I$ has a unique global solution $f\in C([0,\infty), H_{x,\vp}^2(\Tt\times\T))$,
		satisfying
		$$
		\|f(t)-f_0\|_{H_{x,\vp}^2(\Tt\times\T)} \le C e^{-\lambda t}\|f_I-f_0\|_{H_{x,\vp}^2(\Tt\times\T)} \,,\qquad C,\lambda>0 \,.
		$$
	\end{theorem}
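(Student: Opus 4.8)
The plan is to regard \eqref{model} as a perturbation of the uniform equilibrium $f_0$, to establish a spectral gap for the linearised operator via Fourier analysis in $\vp$, to promote this to decay in all variables by a hypocoercivity argument (the angular diffusion alone does not dissipate the spatial profile), and finally to close the nonlinear problem by a bootstrap in $H^2_{x,\vp}$. First I would write $f = f_0 + g$ with $\int_{\Tt\times\T} g\,\md\vp\md x = 0$, which is preserved by mass conservation. Since $Q$ is bilinear and $Q(f_0,f_0)=0$, the perturbation solves
\begin{equation}
\pa_t g + \omega(\vp)\cdot\nabla_x g = \mu\pa_\vp^2 g + \mathcal{L}g + Q(g,g)\,,\qquad \mathcal{L}g := Q(f_0,g)+Q(g,f_0)\,,
\end{equation}
where $\mathcal{L}$ is linear and, because $f_0$ is constant, its operator norm scales like $M$.

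Next I would analyse the spatially homogeneous part $L := \mu\pa_\vp^2 + \mathcal{L}$ on zero-mean functions of $\vp$. The rotation equivariance of the collision rules together with $b(\vp+\alpha,\vpa+\alpha)=b(\vp,\vpa)$ makes $\mathcal{L}$ commute with all translations in $\vp$, hence a Fourier multiplier, diagonal in the basis $\{e^{in\vp}\}$ with symbol $\widehat{\mathcal{L}}(n)$ of size $O(M)$ uniformly in $n$. The diffusion contributes $-\mu n^2$, so the eigenvalue on mode $n$ is $-\mu n^2 + \widehat{\mathcal{L}}(n)$, with real part bounded above by $-\mu n^2 + CM$. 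For $\mu/M$ large enough every nonzero mode therefore carries a strictly negative real part, while constants (the kernel of both $\pa_\vp^2$ and $\mathcal{L}$) give the only zero eigenvalue. This yields microscopic coercivity $\langle -Lg,g\rangle \ge \kappa\,\|g-\Pi g\|^2$ with $\kappa>0$, where $\Pi$ is the orthogonal projection onto angle-independent functions.

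The transport $T := \omega(\vp)\cdot\nabla_x$ is skew-adjoint and does not dissipate the macroscopic part $\Pi g$, which depends only on $x$. To recover spatial decay I would invoke the hypocoercivity framework of Dolbeault--Mouhot--Schmeiser: set $A := \bigl(1+(T\Pi)^{*}(T\Pi)\bigr)^{-1}(T\Pi)^{*}$ and work with $\h(g) := \tfrac12\|g\|^2 + \delta\,\langle Ag,g\rangle$ for small $\delta>0$. Beyond microscopic coercivity, the required macroscopic coercivity $\|T\Pi g\|\gtrsim\|\Pi g\|$ follows from $\int_{\T}\omega\otimes\omega\,\md\vp = \pi I$ together with the Poincaré inequality on $\Tt$ (using $\int\Pi g\,\md x=0$), while the structural identity $\Pi T\Pi = 0$ holds because $\int_{\T}\omega(\vp)\,\md\vp = 0$. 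These give $\tfrac{d}{dt}\h(g)\le -2\lambda\,\h(g)$ for the linearised flow. Since the statement is in $H^2_{x,\vp}$, I would run the same construction for $g$ and its derivatives up to order two; the operators $T$ and $\pa_\vp^2$ commute with $\nabla_x$ and $\pa_\vp$ up to the lower-order commutator $\omega'(\vp)\cdot\nabla_x$, which is absorbed into an equivalent $H^2$ Lyapunov functional.

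Finally I would close the nonlinear estimate. On the three-dimensional phase space $\Tt\times\T$ one has $2>3/2$, so $H^2$ is a Banach algebra and embeds into $L^\infty$; combined with the boundedness of $b$ and the $\vpa$-integration in \eqref{Q} this gives $\|Q(g,g)\|_{H^2}\le C\|g\|_{H^2}^2$. Treating $Q(g,g)$ as forcing for the exponentially decaying linear semigroup, a Duhamel/continuation argument shows that for $\|f_I-f_0\|_{H^2}$ small the quadratic term stays dominated by the linear decay, producing the global solution in $C([0,\infty),H^2_{x,\vp})$ with the claimed rate. The main obstacle I expect is the hypocoercivity step: verifying microscopic coercivity uniformly for the non-self-adjoint integral operator $\mathcal{L}$, checking that $\h$ stays equivalent to the full $H^2$ norm, and controlling the $\vp$-differentiation of both the transport coefficient $\omega(\vp)$ and the $\vp$-dependent integration domains $\TAin$, $\TRout$ when propagating the estimate to second order.
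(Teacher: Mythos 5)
Your proposal is correct in outline and follows the same architecture as the paper: the DMS $L^2$-hypocoercivity functional $\tfrac12\|g\|^2+\delta\langle Ag,g\rangle$ with exactly the paper's $A$, the verification of $\Pi T\Pi=0$, macroscopic coercivity via $\int_{\T}\omega\otimes\omega\,\md\vp=\pi I$ and Poincar\'e on $\Tt$, an $H^2$ extension handling the commutator $\omega^\bot\cdot\nabla_x$, and a Duhamel closure using the algebra property of $H^2$ on the three-dimensional phase space. The one genuine divergence is where you put the linearized collision operator: you fold $\mathcal{L}=Q_M$ into the dissipative operator and obtain microscopic coercivity from the Fourier-multiplier structure (rotation invariance of $b$ makes $Q_M$ a convolution in $\vp$, so the mode-$n$ eigenvalue is $-\mu n^2+\widehat{\mathcal{L}}(n)$ with $|\widehat{\mathcal{L}}(n)|=O(M)$). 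The paper instead keeps $L=\mu\pa_\vp^2$ pure, gets microscopic coercivity from the Poincar\'e inequality on $\T$ alone, and treats $Q_M$ as a bounded perturbation controlled through the structural identity $Q_M=(1-\Pi)Q_M(1-\Pi)$ (a consequence of mass conservation), which yields the explicit threshold $\mu>13M/2$. Interestingly, your Fourier diagonalization is precisely what the paper does later, in Section \ref{bif}, for the spatially homogeneous stability analysis. Be aware, though, that folding $\mathcal{L}$ into $L$ costs you the clean identity $AL=-\mu A$ (which in the paper follows from $\Pi T\pa_\vp^2=-\Pi T$): the entropy dissipation then contains $\delta\langle A\mathcal{L}g,g\rangle$, and bounding it requires essentially the paper's perturbative estimates based on $Q_M=(1-\Pi)Q_M(1-\Pi)$ anyway, so the two routes converge in substance. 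Finally, the obstacle you flag at the end --- $\vp$-differentiation of the integration domains $\TAin$, $\TRout$ --- is real but resolvable: the paper's Lemma \ref{dphiQ} rewrites $Q$ with kernels depending only on $\vp-\vpa$, so that $\pa_\vp$ acts on $Q$ by the Leibniz rule, which is what makes both the $H^2$ semigroup estimate and the bilinear bound $\|Q(g,g)\|_{H^2}\le C\|g\|_{H^2}^2$ go through; without this observation your second-order $\vp$-estimates would not close as stated.
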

	
	The rest of this section is dedicated to the proof of Theorem \ref{existence}. The first step will be a proof of spectral stability
	by an application of the \textit{$L^2$-hypocoercivity method} of \cite{dms}. Then this result will be extended to an $H^2$-setting in order to be able to 
	control the quadratic nonlinearities of the collision operator.

	\subsection{Spectral stability by hypocoercivity}\label{seclin}
	
	Following the notation of \cite{dms}, we write the linearization of \eqref{model} around $f_0=M/(2\pi)$ in the abstract form
	\begin{equation}\label{lin1}
	\pa_t f + Tf = Lf + Q_M f \,,
	\end{equation}
	with the dissipative operator $L:= \mu \pa_{\vp}^2$, the conservative transport operator $T:= \omega(\vp) \cdot \nabla_x$, and the linearized
	collision operator $Q_M f := Q(f_0,f) + Q(f,f_0)$, treated as a perturbation. The linear operators $T$, $L$, and $Q_M$ are closed on the Hilbert space 
	$$
	\h:=\left\{ f\in L^2(\Tt\times\T):\, \int_{\Tt\times\T} f \md\vp\md x = 0 \right\}\,, 
	$$
	and $L+Q_M-T$ generates the strongly continuous semigroup $e^{(L+Q_M-T)t}$ on $\h$. The scalar
	product and the norm on $\h$ will be denoted by $\langle\cdot,\cdot\rangle$ and, respectively, $\|\cdot\|$. The orthogonal projection to the nullspace 
	$\n(L)$ of $L$ is given by the average with respect to the angle:
	$$
	\Pi f := \frac{1}{2\pi} \int_{\T} f \md \vp \,.
	$$
	
	The decay to equilibrium relies on two coercivity properties:
	
	\paragraph{Microscopic coercivity:}
	\begin{equation}\label{mico}
	-\langle Lf, f \rangle = \mu \int_{\Tt\times\T} (\pa_{\vp} f)^2 \md \vp \md x \geq  \mu\|f-\Pi f\|^2 \,,
	\end{equation}
	where the last inequality is the Poincar\'e inequality on $\T$ with optimal Poincar\'e constant $1$.
	
	\paragraph{Macroscopic coercivity:}	
	\begin{equation}\label{maco}
	\|T \Pi f \|^2 =  \pi \int_{\Tt} |\nabla_x \Pi f|^2 \md x \ge 8\pi^3 \int_{\Tt} (\Pi f)^2 \md x  = 4 \pi^2 \| \Pi f \|^2 \,,
	\end{equation}
	where now the Poincar\'e inequality on $\Tt$ with optimal Poincar\'e constant $8\pi^2$ has been used. The macroscopic
	coercivity constant $4\pi^2$ can be seen as a lower bound for the spectrum of the symmetric operator $(T\Pi)^* T\Pi$ on $\n(L)$.
	
	\paragraph{Diffusive macroscopic limit:} The method of \cite{dms} relies on an algebraic property, which guarantees that the macroscopic limit,
	when the dissipative operator $L$ dominates the transport operator $T$, is diffusive:
	\begin{equation}\label{PiTPi}
	\Pi T \Pi  = 0 \,.
	\end{equation}
	It is easily verified in our situation. The macroscopic limit of \eqref{lin1} without the perturbation ($Q_M=0$) is the heat equation on $\Tt$. 
	
	\paragraph{The modified entropy:} A natural entropy for the unperturbed version of \eqref{lin1} is given by the square of the norm:
	$$
	\dt \frac{\|f\|^2}{2} = \langle Lf, f \rangle + \langle Q_M f,f \rangle  \,.
	$$
	The semidefiniteness of the dissipation $\langle Lf, f \rangle$, which vanishes on $\n(L)$, can be remedied by introducing the modified entropy 
	(see \cite{dms})
	\begin{align}\label{h}
	H[f]:= \frac{1}{2} \|f\|^2 + \ve \langle Af,f \rangle \,,
	\end{align}
	with an appropriately chosen small parameter $\ve > 0$, with the operator 
	\begin{align}\label{A}
	A = (1+(T\Pi)^*T\Pi)^{-1}(T\Pi)^* \,.
	\end{align}
	It has been shown in \cite[Lemma 1]{dms} that under the assumption \eqref{PiTPi}, $A$ and $TA$ are bounded operators with
	\begin{equation}\label{Abound}
	\|Af\| \le \frac{1}{2} \|(1-\Pi)f\| \,,\qquad \| TAf\| \le \|(1-\Pi)f\| \,.
	\end{equation}
	For $\ve<1$, the bound on $A$ implies the equivalence inequalities
	\begin{equation}\label{equiv}
	\frac{1-\ve}{2} \|f\|^2 \le H[f] \le \frac{1+\ve}{2} \|f\|^2 \,.
	\end{equation}
	The time derivative of the modified entropy is written as
	\begin{align}\label{dh}
	\dt H[f] = -D[f] \,,
	\end{align}
	where the dissipation is given by 
	\begin{align}
	D[f] &:= -\langle Lf, f \rangle + \ve \langle AT\Pi f, f \rangle + \ve \langle AT(1-\Pi)f,f\rangle - \ve \langle ALf,f \rangle - \ve \langle TAf,f \rangle \nonumber\\
	& \quad\,\, - \langle Q_M f,f \rangle - \ve \langle AQ_M f,f \rangle\,.\label{d}
	\end{align}
	We want to note here that the terms $-\ve \langle Af,Lf\rangle$ and $-\ve \langle Af,Q_M f \rangle$ are not represented in the formulation of $D[f]$, since they vanish due to the easily checked properties $A=\Pi A$ as well as
	\begin{equation}\label{QL-prop}
	Q_M = (1-\Pi)Q_M(1-\Pi)\,,
	\end{equation} 
	which the linearized collision operator 
	\begin{eqnarray*}
		Q_M f &=& 2f_0\int_{\TAin}  b(\tilde\vp,\vpa) (\tilde f + f_*) \md\vpa  
		+ f_0\int_{\TRout} b(\vp^\downarrow,\vpa^\downarrow) (f^\downarrow + f_*^\downarrow) \md\vpa \\
		&& - f_0\int_{\T} b(\vp,\vpa) (f + f_*) \md\vpa
	\end{eqnarray*}
	inherits from $Q$ due to mass conservation.
	Coercivity is provided by the first two terms as a combination of microscopic and macroscopic coercivity and of the observation that
	$AT\Pi$ can be interpreted as the application of the map $z\mapsto z/(1+z)$ to the operator $(T\Pi)^*T\Pi$:
	\begin{equation}\label{est1}
	-\langle Lf, f \rangle + \ve \langle AT\Pi f, f \rangle \ge \mu \|(1-\Pi)f\|^2 + \ve\frac{4\pi^2}{1+4\pi^2} \|\Pi f\|^2 \,.
	\end{equation}
	It remains to show that the last five terms in \eqref{d} can be controlled by the first two. We start with the last term of the first line. The property 
	$A=\Pi A$ and \eqref{PiTPi} imply $TA = (1-\Pi)TA$ and therefore, with \eqref{Abound},
	\begin{equation}\label{est2}
	|\langle TAf,f \rangle| = |\langle TAf,(1-\Pi)f \rangle| \le \|(1-\Pi)f\|^2 \,.
	\end{equation}
	The operator $AT$ is bounded if and only if its adjoint is bounded which, after using the self-adjointness of $\Pi$ and the skew-symmetry of $T$, can be written as 
	$$
	(AT)^*  = -T^2\Pi[1+(T\Pi)^*(T\Pi)]^{-1} \,.
	$$	
	Let us define $g:=[1+(T\Pi)^*(T\Pi)]^{-1}f$, giving 
	\begin{align*}
	(AT)^*f = -T^2\Pi g \,.
	\end{align*}
	Furthermore, the definition of $g$ is equivalent to $g-\Pi (v \cdot \nabla_x(v \cdot \nabla_x \Pi g))= f$. After applying $\Pi$ on both sides and using the 
	notation $\rho_g:=\Pi g$ and $\rho_f:=\Pi f$, the equation reads
	\begin{align*}
	\rho_g - \frac{1}{2} \Delta_x \rho_g = \rho_f \,.
	\end{align*}
	Testing against $\Delta_x \rho_g$ implies $\|\Delta_x \rho_g\|_{L_x^2} \le 2 \|\rho_f\|_{L_x^2}$. Therefore
	\begin{align*}
	\|(AT)^*f\|^2 &= \|T^2 \rho_g\|^2 \le \pi \|\nabla_x^2 \rho_g\|_{L_x^2}^2 = \pi \|\Delta_x \rho_g\|_{L_x^2}^2 \le 4\pi \|\rho_f\|_{L_x^2}^2 = 2 \|\Pi f\|^2 \,,
	\end{align*}	
	implying
	\begin{equation}\label{est3}
	|\langle AT(1-\Pi)f,f\rangle| = |\langle (1-\Pi)f, (AT)^*f\rangle| \le \sqrt{2}\, \|\Pi f\|\,\|(1-\Pi) f\| \,.
	\end{equation}
	Since, by a straightforward computation, $\Pi TL = -\mu \Pi T$ we have $AL = -\mu A$ and, thus,
	\begin{equation}\label{est4}
	|\langle ALf,f\rangle| = \mu |\langle Af,\Pi f\rangle| \le \frac{\mu}{2} \, \|\Pi f\|\,\|(1-\Pi) f\| \,.
	\end{equation}
	Finally, we deal with the perturbation terms. Using $0\le b\le 1$ we easily conclude
	$$
	(Q_M f)f \le f_0|f| \left( 6\int_{\T} |f_*| d\vpa + \pi |f^\downarrow|\right) \,,
	$$
	and therefore, with \eqref{QL-prop} and with the Cauchy-Schwarz inequality,
	\begin{equation}\label{est5}
	\langle Q_M f,f\rangle  \le 13\pi f_0 \| (1-\Pi)f\|^2 = \frac{13}{2}M \| (1-\Pi)f\|^2\,.
	\end{equation}
	Similarly,
	$$
	|Q_M f| \le f_0\left( 6\int_{\T} |f_*| d\vpa + \pi |f^\downarrow| + 2\pi |f| \right) \,,
	$$
	implying
	$$
	\|Q_M f\| \le f_0 \left( 6\sqrt{2\pi}\|f\| + 3\pi\|f\|\right) = 3M\left(2\sqrt{\frac{2}{\pi}}+1\right)\|f\| \,.
	$$
	Combining this with \eqref{Abound}, $A=\Pi A$, and with \eqref{QL-prop} gives
	\begin{equation}\label{est6}
	|\langle AQ_M f,f\rangle| \le 3M\left(\sqrt{\frac{2}{\pi}}+\frac{1}{2}\right)\|\Pi f\| \|(1-\Pi)f\|\,.
	\end{equation}

	\paragraph{Hypocoercivity:} Using our results \eqref{est1}, \eqref{est2}, \eqref{est3}, \eqref{est4}, \eqref{est5}, \eqref{est6} in \eqref{dh}, \eqref{d} gives
	\begin{eqnarray*}
		\dt H[f] &\le& -\left(\mu - \frac{13}{2}M - \ve\right) \|(1-\Pi)f\|^2 - \ve\frac{8\pi^3}{1+8\pi^3} \|\Pi f\|^2 \\
		&& + \ve \left(\sqrt{2} + \frac{\mu}{2} + 3M\left(\sqrt{\frac{2}{\pi}}+\frac{1}{2}\right)\right)\|\Pi f\|\,\|(1-\Pi)f\| \,.  
	\end{eqnarray*}
	Obviously for $\mu > 13M/2$ (as requested in Theorem \ref{existence}) and for $\ve$ small enough, the right hand side is negative definite and controls 
	$\|f\|^2 = \|\Pi f\|^2 + \|(1-\Pi)f\|^2$. With \eqref{equiv} we obtain the existence of $\lambda> 0$, such that
	$$
	\dt H[f] \le - 2\lambda H[f] \,,
	$$
	and therefore exponential decay of the modified entropy and also of $\|f\|$ by another application of \eqref{equiv}. This proves spectral stability of the
	uniform equilibrium in $L^2$.
	
	\begin{theorem}\label{thm:hypoL2}
		Let $\mu/M > 13/2$. Then there exist positive constants $\lambda$ and $C$, such that for any initial datum $f_I \in \h$, we have
		\begin{equation}\label{decay}
		\|e^{t(L+Q_M-T)}f_I\| \leq C e^{- \lambda t} \|f_I\| \,, \qquad t \ge 0 \,.
		\end{equation}
	\end{theorem}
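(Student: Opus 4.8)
The strategy is to read the decay directly off a Grönwall inequality for the modified entropy $H$ from \eqref{h}, since essentially every estimate needed has already been assembled above. Concretely, for an initial datum in the domain of the generator I would set $f(t):=e^{t(L+Q_M-T)}f_I$ and insert the six bounds \eqref{est1}--\eqref{est6} into the dissipation identity \eqref{dh}, \eqref{d}. Writing $X:=\|\Pi f\|$ and $Y:=\|(1-\Pi)f\|$, this collapses the whole dissipation into a single two-variable quadratic form,
\begin{equation*}
\dt H[f] \le -a\,Y^2 - \ve\, b\, X^2 + \ve\, c\, X Y \,,
\end{equation*}
where $a := \mu - \tfrac{13}{2}M - \ve$, the constant $b>0$ is the (purely numerical) macroscopic coercivity coefficient produced by \eqref{est1}, and $c := \sqrt{2} + \tfrac{\mu}{2} + 3M\bigl(\sqrt{2/\pi}+\tfrac12\bigr)>0$ collects all the cross terms from \eqref{est2}--\eqref{est6}.

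The crux is then an elementary but decisive choice of the free parameter $\ve$: I would pick $\ve>0$ small enough that the right-hand side above is negative definite in $(X,Y)$. This is exactly where the hypothesis $\mu/M>13/2$ enters, since it forces $a>0$ for all sufficiently small $\ve$. The negative-definiteness is the discriminant condition $4\,a\,\ve b > (\ve c)^2$, i.e.\ $4\bigl(\mu-\tfrac{13}{2}M-\ve\bigr)\,b > \ve\, c^2$, which holds once $\ve$ is small because the left-hand side tends to $4\bigl(\mu-\tfrac{13}{2}M\bigr)b>0$ while the right-hand side tends to $0$ as $\ve\to 0$. For such $\ve$ there is a $\kappa>0$ with $\dt H[f] \le -\kappa\,(X^2+Y^2) = -\kappa\,\|f\|^2$, and the upper bound in \eqref{equiv} upgrades this to $\dt H[f] \le -2\lambda\, H[f]$ with $\lambda := \kappa/(1+\ve)$. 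Grönwall then gives $H[f(t)] \le e^{-2\lambda t} H[f_I]$.

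It remains to convert entropy decay back to norm decay and to remove the domain restriction. The two-sided equivalence \eqref{equiv} yields
\begin{equation*}
\tfrac{1-\ve}{2}\,\|f(t)\|^2 \le H[f(t)] \le e^{-2\lambda t}\,H[f_I] \le e^{-2\lambda t}\,\tfrac{1+\ve}{2}\,\|f_I\|^2 \,,
\end{equation*}
which is precisely \eqref{decay} with $C := \sqrt{(1+\ve)/(1-\ve)}$. The only point lying outside the computations already in hand is the passage from data in the domain of $L+Q_M-T$ to arbitrary $f_I\in\h$; since that operator generates a strongly continuous semigroup on $\h$, the estimate \eqref{decay} extends from the dense domain to all of $\h$ by density together with the strong continuity and local boundedness of $t\mapsto e^{t(L+Q_M-T)}$. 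I do not anticipate a genuine obstacle here: the entire substance of the theorem is the sign of the quadratic form, and that is controlled solely by the threshold $\mu/M>13/2$.
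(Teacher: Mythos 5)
Your proposal is correct and follows essentially the same route as the paper: it inserts the bounds \eqref{est1}--\eqref{est6} into \eqref{dh}, \eqref{d}, chooses $\ve$ small enough that the resulting quadratic form in $\bigl(\|\Pi f\|,\|(1-\Pi)f\|\bigr)$ is negative definite under the hypothesis $\mu > 13M/2$, and closes via the equivalence \eqref{equiv} and a Grönwall argument. Your explicit discriminant condition and the density extension from the domain of the generator to all of $\h$ merely fill in details the paper leaves implicit.
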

	
	This result can easily be extended to the Sobolev space $H^2(\Tt\times\T)\cap\h$, which is continuously imbedded in $L^\infty(\Tt\times\T)$ and, thus, 
	an algebra. The procedure will only be outlined in the following.
	
	Note that $f\in H^2(\Tt\times\T)\cap\h$ implies that the partial
	derivatives of $f$ lie in $\h$. Therefore Theorem \ref{thm:hypoL2} immediately carries over to the pure $x$-derivatives, since the coefficients in
	\eqref{lin1} are $x$-independent and the $x$-derivatives thus solve the same equation. If there is also differentiation with respect to $\vp$ on the other hand,
	we have to proceed recursively. The following crucial, but technical observation that the collision operator $Q$ factorizes when derived with respect to $\vp$, will be used throughout the following considerations.
	\begin{lemma}\label{dphiQ} 
		Let $h_1, h_2 \in H^2(\Tt \times \T)$, we observe that
		$$\pa_{\vp}Q(h_1,h_2) = Q(\pa_{\vp}h_1,h_2)+Q(h_1,\pa_{\vp}h_2),$$
		and hence also 
		$$\pa_{\vp}^2 Q(h_1,h_2) = Q(\pa_{\vp}^2h_1,h_2) + 2Q(\pa_{\vp}h_1,\pa_{\vp}h_2) +Q(h_1,\pa_{\vp}^2h_2)\,$$
		i.e. the collision operator $Q(h_1,h_2)$ behaves like a pointwise product with respect to the $\vp$-derivative.
	\end{lemma}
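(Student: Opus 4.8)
The difficulty is that the angle $\vp$ enters the gain terms of \eqref{Q} in three places simultaneously: through the $\vp$-dependent integration limits of $\TAin$ and $\TRout$, through the shifted arguments $\tilde\vp=2\vp-\vpa$ and $\vp^\downarrow=\vp+\pi$ at which $h_1$ is evaluated, and through the first slot of the cross-section $b$. A naive application of the Leibniz rule would therefore produce boundary contributions from the moving endpoints together with a term carrying $\pa_\vp b$, none of which appear in the claimed product formula. The plan is to eliminate all of these at once by exploiting the rotation symmetry $b(\vp+\alpha,\vpa+\alpha)=b(\vp,\vpa)$ recorded after \eqref{Q}: it allows $b$ to be regarded as a function of the angular difference $\vpa-\vp$ alone, after which a $\vp$-dependent shift of the integration variable makes the limits constant and transfers the entire $\vp$-dependence into translated arguments of $h_1$ and $h_2$.

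Concretely, I would substitute $\psi=\vpa-\vp$ in each of the three integrals. In the alignment term this gives $\vpa=\vp+\psi$ and $\tilde\vp=2\vp-\vpa=\vp-\psi$, so that $b(\tilde\vp,\vpa)=b(\vp-\psi,\vp+\psi)=b(-\psi,\psi)$ by rotation symmetry, the endpoints $\vp\pm\pi/4$ of $\TAin$ become the fixed values $\pm\pi/4$, and the term reads $2\int_{-\pi/4}^{\pi/4} b(-\psi,\psi)\,h_1(\vp-\psi)\,h_2(\vp+\psi)\md\psi$. The reversal term is treated identically: with $\alpha=-(\vp+\pi)$ the kernel $b(\vp^\downarrow,\vpa^\downarrow)=b(\vp+\pi,\vp+\psi+\pi)$ loses its $\vp$-dependence, the endpoints of $\TRout$ become the constants $\pi/2$ and $3\pi/2$, and $h_1,h_2$ are evaluated at $\vp+\pi$ and $\vp+\psi+\pi$. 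In the loss term the domain is already the full torus, so by $2\pi$-periodicity the same substitution is merely a translation of the integration variable and again leaves a $\vp$-independent kernel multiplying $h_1(\vp)\,h_2(\vp+\psi)$.

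Once every integral is written over a fixed domain with a kernel that no longer depends on $\vp$, differentiation under the integral sign is immediate and no boundary terms survive. In each term the integrand depends on $\vp$ only through the two factors $h_1(\cdot)$ and $h_2(\cdot)$, so the ordinary product rule splits $\pa_\vp$ of the integrand into one piece in which $h_1$ is differentiated and one in which $h_2$ is differentiated; for instance $\pa_\vp h_1(\vp-\psi)=(\pa_\vp h_1)(\vp-\psi)$. Undoing the substitution, the first piece is precisely the corresponding term of $Q(\pa_\vp h_1,h_2)$ and the second that of $Q(h_1,\pa_\vp h_2)$, since the change of variables $\psi\mapsto\vpa$ is the very one that defined $Q$. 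Summing the three contributions gives $\pa_\vp Q(h_1,h_2)=Q(\pa_\vp h_1,h_2)+Q(h_1,\pa_\vp h_2)$.

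The second identity then follows by applying the first one twice, treating $Q$ as a bilinear form obeying the Leibniz rule: $\pa_\vp^2 Q(h_1,h_2)=\pa_\vp\big(Q(\pa_\vp h_1,h_2)+Q(h_1,\pa_\vp h_2)\big)=Q(\pa_\vp^2 h_1,h_2)+2Q(\pa_\vp h_1,\pa_\vp h_2)+Q(h_1,\pa_\vp^2 h_2)$, where the $H^2$-regularity of $h_1,h_2$ justifies all derivatives and the differentiation under the integral. I expect the entire difficulty to lie in the first step — recognizing that the rotation symmetry of $b$ is exactly the structural property needed to absorb both the moving integration limits and the direct $\vp$-dependence of the kernel into the change of variables; once this is in place, the product rule does the rest.
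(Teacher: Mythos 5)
Your proposal is correct, and it rests on exactly the ``crucial observation'' the paper itself uses: the rotation symmetry $b(\vp+\alpha,\vpa+\alpha)=b(\vp,\vpa)$ forces every kernel appearing in \eqref{Q} to depend on the angular difference alone. Where you diverge is in the mechanics. The paper keeps $\vpa$ as the integration variable, extends all three integrals to the full torus by absorbing the indicator functions of $\TAin$ and $\TRout$ into kernels $b_{AL}(\vp-\vpa)$ and $b_{REV}(\vp-\vpa)$, and then, upon differentiating this convolution in $\vp$, integrates by parts in $\vpa$ to keep the derivative off the kernels and place it on $h_1$, $h_2$. You instead substitute $\psi=\vpa-\vp$, which simultaneously freezes the integration limits and renders the kernels $\vp$-independent, so that differentiation under the integral sign never touches the kernel and no integration by parts is needed. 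Your variant is arguably the more robust of the two: $b_{AL}$ and $b_{REV}$ contain indicator factors and are merely piecewise smooth, so the paper's route implicitly passes through distributional kernel derivatives before the integration by parts removes them, whereas in your computation they never arise; moreover, your substitution automatically resolves the chain-rule factor coming from $\tilde\vp=2\vp-\vpa$ (in the paper's route the naive factor $2(\pa_\vp h_1)(\tilde\vp)$ is reduced to the correct coefficient by the boundary-free integration by parts, a cancellation your bookkeeping sidesteps entirely). What the paper's formulation buys in exchange is an explicit representation of $Q$ as a sum of convolutions on $\T$, a structure it reuses later, e.g.\ for the Young-type estimate in the proof of Lemma \ref{lem:Q-est}. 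Your derivation of the second-derivative identity from bilinearity and two applications of the first identity, using $h_1,h_2\in H^2$ so that $\pa_\vp h_i\in H^1$, matches the paper and is fine.
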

	\begin{proof}
		This property can be seen easily by rewriting the collision operator in the form
		$$
		Q(h_1,h_2) = \int_{\T} b_{AL}(\vp-\vpa)\tilde{h}_1 h_{2*} \, \md \vpa + \int_{\T} b_{REV}(\vp-\vpa) h_1^\downarrow h_{2*}^\downarrow \, \md \vpa - \int_{\T} b(\vp-\vpa) h_1 h_{2*} \, \md \vpa \, ,
		$$
		where we defined 
		$$
		b_{AL}(\vp-\vpa):= b(\tilde{\vp},\vpa) \mathbb{1}_{\TAin}(\vpa) = b(\tilde{\vp},\vpa) \mathbb{1}_{\{ \cos{(2(\cdot))}>0 \,\&\, \cos{(\cdot)}>0 \}}(\vp-\vpa) 
		$$
		and
		$$
		b_{REV}(\vp-\vpa):=b(\vp^\downarrow,\vpa^\downarrow)\mathbb{1}_{\TRout}(\vpa)= b(\vp^\downarrow,\vpa^\downarrow) \mathbb{1}_{\left\{\cos{(\cdot)}<0\right\}}(\vp-\vpa)\,,
		$$
		with the crucial observation that $b_{AL}$, $b_{REV}$ and $b$ depend on the difference $\vp-\vpa$. Deriving the collision operator with respect to $\vp$ while using integration by parts to avoid the occurrence of derivatives of the collision kernel gives the desired result.
	\end{proof}
	
	The first $\vp$-derivative, $g := \partial_\vp f$, solves the equation
	\begin{eqnarray}\label{f_phi}
	\partial_t g + (T-L)g &=& -\omega(\vp)^\bot\cdot\nabla_x f + 2f_0\int_{\TAin}  b(\tilde\vp,\vpa) (\tilde{g} + g_*) \md\vpa  \nonumber\\
	&& + f_0\int_{\TRout} b(\vp^\downarrow,\vpa^\downarrow) (g^\downarrow + g_*^\downarrow) \md\vpa 
	- f_0\int_{\T} b(\vp,\vpa) (g + g_*) \md\vpa\,, \\
	&=& -\omega(\vp)^\bot\cdot\nabla_x f + Q_M g\,, \notag
	\end{eqnarray}
	with $(\omega_1,\omega_2)^\bot = (-\omega_2,\omega_1)$. The first term on the right hand side comes from the $\vp$-dependence of the coefficient
	in the transport term, whereas the remaining terms are $\partial_\vp(Q_M f)$, which is derived by first using the above Lemma \ref{dphiQ} before linearizing around the uniform equilibrium $f_0$.
	
	By Theorem \ref{thm:hypoL2} and by the argument above, exponential decay of the first term on the right hand side is already known.
	The remaining three terms are given by $Q_M$ applied to $g$ and therefore can be treated as a perturbation of $L-T$ as in the proof of Theorem \ref{thm:hypoL2}. For this step again property \eqref{QL-prop} is important. As a consequence, exponential decay of $g$ follows from the variation-of-constants formula for $\mu$ large enough. The derivatives 
	$\nabla_x \partial_\vp f$ (with second order $x$-derivatives in the inhomogeneity) and $\partial_\vp^2 f$ (with $\nabla_x f$ and $\nabla_x \partial_\vp f$
	in the inhomogeneity) are treated analogously, proving the following result.
	
	\begin{cor}\label{cor:hypoH2}
		For $\mu/M$ large enough there exist positive constants $\lambda$ and $C$, such that for any initial datum $f_I \in H^2(\Tt\times\T)\cap\h$, we have
		\begin{equation}\label{decayH2}
		\|e^{t(L+Q_M-T)}f_I\|_{H^2(\Tt\times\T)} \leq C e^{- \lambda t} \|f_I\|_{H^2(\Tt\times\T)} \,, \qquad  t \ge 0 \,.
		\end{equation}
	\end{cor}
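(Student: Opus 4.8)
The plan is to bootstrap Corollary~\ref{cor:hypoH2} from the $L^2$ decay of Theorem~\ref{thm:hypoL2} by differentiating \eqref{lin1} and treating each derivative as an inhomogeneous problem whose source is already controlled. The governing principle is that the semigroup $e^{t(L+Q_M-T)}$ satisfies the same decay estimate on every derivative, because the coefficients of $T$, $L$, and $Q_M$ are $x$-independent and, thanks to Lemma~\ref{dphiQ}, the $\vp$-differentiated collision operator reproduces a term of the form $Q_M(\cdot)$ acting on the derivative, plus a lower-order inhomogeneity. I would organize the argument by the total order of differentiation, building up the $H^2$ norm from the constituent pieces $f$, $\nabla_x f$, $\partial_\vp f$, $\nabla_x^2 f$, $\nabla_x\partial_\vp f$, and $\partial_\vp^2 f$.

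First I would dispatch the pure $x$-derivatives. Since nothing in \eqref{lin1} depends on $x$, the functions $\nabla_x f$ and $\nabla_x^2 f$ solve exactly the same linear equation as $f$; they lie in $\h$ because integration of a derivative over the torus vanishes. Hence Theorem~\ref{thm:hypoL2} applies verbatim to each of them, yielding exponential decay of $\|\nabla_x f\|$ and $\|\nabla_x^2 f\|$ at the same rate $\lambda$. Next I would treat the first $\vp$-derivative $g:=\partial_\vp f$, which by \eqref{f_phi} satisfies
\begin{equation*}
\partial_t g + (T-L)g = -\omega(\vp)^\bot\cdot\nabla_x f + Q_M g \,.
\end{equation*}
Here the term $Q_M g$ is absorbed into the linear operator, so $g$ is governed by the same generator $L+Q_M-T$ whose semigroup decays, while the inhomogeneity $-\omega(\vp)^\bot\cdot\nabla_x f$ decays exponentially by the previous step. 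The variation-of-constants formula
\begin{equation*}
g(t) = e^{t(L+Q_M-T)}g(0) - \int_0^t e^{(t-s)(L+Q_M-T)}\,\omega(\vp)^\bot\cdot\nabla_x f(s)\,\md s
\end{equation*}
then gives exponential decay of $\|g\|$: the homogeneous part decays by Theorem~\ref{thm:hypoL2}, and for the Duhamel integral one bounds $\|e^{(t-s)(L+Q_M-T)}\|\le Ce^{-\lambda(t-s)}$ against $\|\nabla_x f(s)\|\le C'e^{-\lambda s}\|f_I\|_{H^2}$, producing a convolution of two exponentials that still decays like $te^{-\lambda t}$, absorbable into $e^{-\lambda' t}$ for any $\lambda'<\lambda$. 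I would then repeat this for $\nabla_x\partial_\vp f$ (whose inhomogeneity involves $\nabla_x^2 f$, already controlled) and for $\partial_\vp^2 f$ (whose inhomogeneity, produced by differentiating the transport coefficient twice and by the $2Q(\partial_\vp f,\partial_\vp f)$-type cross term of Lemma~\ref{dphiQ} after linearization, involves $\nabla_x f$ and $\nabla_x\partial_\vp f$, both now known to decay). Summing the resulting estimates over all components yields the $H^2$ decay \eqref{decayH2}.

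The main obstacle is the verification that each successive $\vp$-differentiation leaves a genuine $Q_M$-structure on the top-order term together with a strictly lower-order inhomogeneity, so that the hypocoercivity machinery of Theorem~\ref{thm:hypoL2} applies unchanged to the leading part; this is precisely where Lemma~\ref{dphiQ} and the projection identity \eqref{QL-prop} are essential, since they guarantee both that the differentiated collision operator is again of perturbative form and that $Q_M$ maps into the orthogonal complement of $\n(L)$. A secondary technical point is the bookkeeping of the extra transport contribution $-\omega(\vp)^\bot\cdot\nabla_x$ arising from the $\vp$-dependence of $\omega(\vp)$: it raises the $x$-order by zero but couples $\vp$-derivatives to $x$-derivatives, so one must differentiate in the correct order (all $x$-derivatives first, then ascend in $\vp$) to ensure every inhomogeneity has already been estimated. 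Once the recursion is set up consistently, the decay rate $\lambda$ can be taken uniform across all derivatives (shrinking slightly to accommodate the polynomial factors from the Duhamel convolutions), and the closeness-to-equilibrium nonlinear result of Theorem~\ref{existence} follows by a standard fixed-point argument using that $H^2(\Tt\times\T)\cap\h$ is a Banach algebra, which controls the quadratic term $Q(f-f_0,f-f_0)$.
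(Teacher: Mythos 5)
Your proposal is correct and follows essentially the same route as the paper: pure $x$-derivatives solve the same equation and inherit the $L^2$ decay directly, while $\vp$-derivatives are handled recursively via \eqref{f_phi} and Duhamel, using Lemma~\ref{dphiQ} and \eqref{QL-prop} to keep the top-order term of $Q_M$-form, with the rate $\lambda$ shrunk slightly to absorb the polynomial factors from the two-stage recursion (exactly the paper's remark). One tiny inaccuracy: at the linearized level there is no $2Q(\partial_\vp f,\partial_\vp f)$ cross term in the equation for $\partial_\vp^2 f$, since $\partial_\vp f_0=0$ gives $\partial_\vp^2 (Q_M f)=Q_M(\partial_\vp^2 f)$ exactly --- the inhomogeneity consists only of the transport contributions involving $\nabla_x f$ and $\nabla_x\partial_\vp f$, and such cross terms enter only in the nonlinear estimate of Lemma~\ref{lem:Q-est}.
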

	
	\begin{rem}
		The exponential rate constant $\lambda$ in Corollary \ref{cor:hypoH2} has to be chosen a little smaller than in Theorem \ref{thm:hypoL2} because
		of resonance in the inhomogeneous equations like \eqref{f_phi}. Otherwise an additional factor $t^2$ would appear as a result of the two-stage
		recursion process needed for estimating $\partial_\vp^2 f$. Also the ratio $\mu/M$ might have to be larger than in Theorem \ref{thm:hypoL2}.
	\end{rem}

	\subsection{Nonlinear stability of the uniform equilibrium}
	
	This section is devoted to the proof of Theorem \ref{existence}. We introduce the perturbation 
	$$
	h := f - f_0\in H^2(\Tt\times\T) \cap \mathcal{H} \,,
	$$ 
	satisfying, with the notation introduced above,
	\begin{equation}\label{nonlin}
	\pa_t h + Th = Lh + Q_M h + Q(h,h)  \,,\qquad h(t=0) = f_I - f_0 \,,
	\end{equation}
	and consider the mild formulation
	$$
	h(t) = e^{t(L+Q_M-T)}(f_I-f_0) + \int_0^t e^{(t-s)(L+Q_M-T)} Q(h(s),h(s)) \md s \,.
	$$
	For the estimation of the semigroup, Corollary \ref{cor:hypoH2} will be used, and apart from that we need estimates of the quadratic collision operator.
	
	\begin{lemma}\label{lem:Q-est}
		Let $h_1,h_2 \in H^2(\Tt\times\T) \cap \mathcal{H}$. Then $Q(h_1,h_2) \in H^2(\Tt\times\T) \cap \mathcal{H}$ and there exists a constant $\bar Q$ such that
		$$
		\|Q(h_1,h_2)\|_{H^2(\Tt\times\T)} \le \bar Q \,\|h_1\|_{H^2(\Tt\times\T)} \|h_2\|_{H^2(\Tt\times\T)} \,.
		$$
	\end{lemma}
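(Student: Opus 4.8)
The plan is to reduce the full $H^2$-estimate to a small family of $L^2$-estimates for $Q$ applied to derivatives of $h_1,h_2$, and then to exploit the embedding $H^2(\Tt\times\T)\hookrightarrow L^\infty(\Tt\times\T)$, which holds because the domain is three-dimensional. First I would record the membership $Q(h_1,h_2)\in\mathcal{H}$. The collision operator conserves mass, i.e. $\int_\T Q(h_1,h_2)\,\md\vp=0$ for every $x$, which follows directly from the form in Lemma \ref{dphiQ} by the substitutions $\vpa\mapsto 2\vp-\vpa$ and $\vpa\mapsto\vpa+\pi$: the two gain terms, after these changes of variable, recombine into the loss term. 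Integrating in $x$ yields the zero-average condition defining $\mathcal{H}$, while $Q(h_1,h_2)\in H^2$ will follow a posteriori from the norm bound.

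The second step is to reduce the $H^2$-norm to $L^2$-norms of $Q$ applied to derivatives. Since $Q$ acts pointwise in $x$ and bilinearly, the spatial derivatives obey $\partial_{x_i}Q(h_1,h_2)=Q(\partial_{x_i}h_1,h_2)+Q(h_1,\partial_{x_i}h_2)$, while Lemma \ref{dphiQ} supplies the analogous Leibniz rule for $\partial_\vp$. Hence for any multi-index $\gamma$ with $|\gamma|\le 2$ mixing $x$- and $\vp$-derivatives one has $\partial^\gamma Q(h_1,h_2)=\sum_{\alpha+\beta=\gamma}\binom{\gamma}{\alpha}Q(\partial^\alpha h_1,\partial^\beta h_2)$, so it suffices to bound $\|Q(g_1,g_2)\|_{L^2(\Tt\times\T)}$ for $g_i=\partial^{\alpha_i}h_i$ with $|\alpha_1|+|\alpha_2|\le 2$.

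The core ingredient is a bilinear bound for each fixed $x$. Using $0\le b\le 1$, Cauchy--Schwarz in $\vpa$, and the fact that $\vpa\mapsto 2\vp-\vpa$ and $\vpa\mapsto\vpa+\pi$ are measure-preserving on $\T$, each of the three collision terms of $Q(g_1,g_2)(x,\cdot)$ satisfies $\|Q(g_1,g_2)(x,\cdot)\|_{L^2_\vp}\le C\|g_1(x,\cdot)\|_{L^2_\vp}\|g_2(x,\cdot)\|_{L^2_\vp}$ (for the reversal term one integrates in $\vp$ before estimating, since a single value $g_1(x,\vp+\pi)$ appears). Placing an $L^\infty_\vp$-norm on one factor instead gives the two working global bounds $\|Q(g_1,g_2)\|_{L^2}\le C\|g_1\|_{L^2}\|g_2\|_{L^\infty}$ and $\|Q(g_1,g_2)\|_{L^2}\le C\|g_1\|_{L^\infty}\|g_2\|_{L^2}$. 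Whenever one of $\alpha_1,\alpha_2$ vanishes, that factor is an undifferentiated $h_i\in H^2\hookrightarrow L^\infty$, so one of these two bounds closes the estimate with the other factor, a derivative of order $\le 2$, placed in $L^2$.

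The only remaining, and hardest, case is $|\alpha_1|=|\alpha_2|=1$, where both factors are first derivatives lying merely in $H^1$, which does \emph{not} embed into $L^\infty$ in three dimensions, so neither working bound applies. Here I would start from the symmetric pointwise-in-$x$ bound $\|Q(g_1,g_2)(x,\cdot)\|_{L^2_\vp}\le C\|g_1(x,\cdot)\|_{L^2_\vp}\|g_2(x,\cdot)\|_{L^2_\vp}$, integrate its square over $\Tt$, and apply Hölder's inequality to reduce matters to controlling $\|u_i\|_{L^4_x}$ with $u_i(x):=\|g_i(x,\cdot)\|_{L^2_\vp}$. Since $|\nabla_x u_i|\le\|\nabla_x g_i(x,\cdot)\|_{L^2_\vp}$, the function $u_i$ inherits $H^1_x$-regularity with $\|u_i\|_{H^1_x}\le\|g_i\|_{H^1}\le\|h_i\|_{H^2}$, and the two-dimensional embedding $H^1(\Tt)\hookrightarrow L^4(\Tt)$ (Ladyzhenskaya's inequality) gives $\|u_i\|_{L^4_x}\le C\|h_i\|_{H^2}$, closing this case. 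Collecting constants over all terms produces the asserted $\bar Q$. I expect this $(1,1)$ case to be the main obstacle, together with the technical point that the collision integrals are not genuine convolutions, which is precisely what the measure-preserving substitutions in the $\vpa$-variable resolve.
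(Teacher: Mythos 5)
Your proposal is correct, and its skeleton is the paper's: reduce via the Leibniz rule (Lemma \ref{dphiQ} in $\vp$, trivially in $x$) to $L^2$-bounds on $Q$ applied to derivatives, dispose of the cases with an undifferentiated factor through $H^2(\Tt\times\T)\hookrightarrow L^\infty$ exactly as in \eqref{Q-est1}--\eqref{Q-est2}, and identify the mixed case $|\alpha_1|=|\alpha_2|=1$ as the crux. The one genuine divergence is how that case is closed. The paper converts the same fixed-$x$ bilinear estimate you use, $\int_{\T}Q(h_1,h_2)^2\md\vp\le 6\pi\int_{\T}h_1^2\md\vp\int_{\T}h_2^2\md\vp$, into the isotropic bound \eqref{Q-est3}, $\|Q(g_1,g_2)\|\le\sqrt{12\pi}\,\|g_1\|_{L^4(\Tt\times\T)}\|g_2\|_{L^4(\Tt\times\T)}$, and finishes with the three-dimensional embedding $H^1(\Tt\times\T)\hookrightarrow L^4(\Tt\times\T)$ already packaged in \eqref{Sobolev}. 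You instead stay anisotropic: you pass to $u_i(x)=\|g_i(x,\cdot)\|_{L^2_\vp}$, invoke the (true, standard) weak-derivative inequality $|\nabla_x u_i|\le\|\nabla_x g_i(x,\cdot)\|_{L^2_\vp}$, and apply the two-dimensional Ladyzhenskaya embedding $H^1(\Tt)\hookrightarrow L^4(\Tt)$. Both routes are valid; yours is marginally sharper (by H\"older in $\vp$, control in $L^4_xL^2_\vp$ is weaker than in $L^4(\Tt\times\T)$) at the price of the extra lemma on differentiating the partial norm, while the paper's is more economical given \eqref{Sobolev}. Two immaterial differences: the paper avoids mixed $x$-$\vp$ derivatives altogether by working with the elliptic-regularity norm $\|h\|+\|\Delta_x h\|+\|\partial_\vp^2 h\|$, whereas you estimate all multi-indices directly with the same three bounds, which works equally well. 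Finally, one cosmetic caution in your $\mathcal{H}$-membership remark: to recombine the alignment gain with the loss term one substitutes in $\vp$ (namely $\vp\mapsto(\vp'+\vpa)/2$ after integrating $\int_\T\md\vp$, the Jacobian $1/2$ absorbing the factor $2$); substituting $\vpa\mapsto 2\vp-\vpa$ at fixed $\vp$ only exchanges the roles of the two arguments. Since mass conservation of $Q$ is established in \cite{ourpaper}, this does not affect the validity of your argument.
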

	
	\begin{proof}
		Because of the Sobolev inequality 
		\begin{equation}\label{Sobolev}
		\|h\|_{L^\infty(\Tt\times\T)} + \|\nabla_x h\|_{L^4(\Tt\times\T)} + \|\partial_\vp h\|_{L^4(\Tt\times\T)} \le c_S \|h\|_{H^2(\Tt\times\T)} \,,
		\end{equation}
		it will be sufficient to find estimates in terms of the $L^\infty$-norms of $h_1$ and $h_2$ or of the $L^4$-norms of the first order derivatives. 
		We start with the observation
		$$
		|Q(h_1,h_2)| \le 4 \|h_1\|_{L^\infty(\Tt\times\T)} \int_{\T} |h_2| \md\vp \,,
		$$
		implying 
		\begin{equation}\label{Q-est1}
		\|Q(h_1,h_2)\| \le 8\pi \,\|h_1\|_{L^\infty(\Tt\times\T)} \|h_2\| \,,
		\end{equation}
		and similarly,
		\begin{equation}\label{Q-est2}
		\|Q(h_1,h_2)\| \le \sqrt{8\pi+33\pi^2} \,\|h_1\| \|h_2\|_{L^\infty(\Tt\times\T)} \,,
		\end{equation}
		Alternatively it is, by the convolution structure of the collision terms, straightforward to show
		$$
		\int_{\T} Q(h_1,h_2)^2 \md\vp \le 6\pi \int_{\T} h_1^2\md\vp  \int_{\T} h_2^2\md\vp \,,
		$$
		with the consequence
		\begin{equation}\label{Q-est3}
		\|Q(h_1,h_2)\| \le \sqrt{12\pi} \|h_1\|_{L^4(\Tt\times\T)} \|h_2\|_{L^4(\Tt\times\T)} \,.
		\end{equation}
		By elliptic regularity, we may use the equivalent norm $\|h\|_* = \|h\| + \|\Delta_x h\| + \|\partial_\vp^2 h\|$ on $H^2(\Tt\times\T)$. We have
		\begin{eqnarray*}
			\|\Delta_x Q(h_1,h_2)\| &\le& \|Q(\Delta_x h_1,h_2)\| + \|Q(h_1,\Delta_x h_2)\| + 2\|Q(\nabla_x h_1,\nabla_x h_2)\|  \\
			&\le&  \sqrt{8\pi+33\pi^2}  \,\|\Delta_x h_1\| \|h_2\|_{L^\infty(\Tt\times\T)} + 8\pi \,\|h_1\|_{L^\infty(\Tt\times\T)} \|\Delta_x h_2\| \\
			&& + \sqrt{12\pi} \|\nabla_x h_1\|_{L^4(\Tt\times\T)} \|\nabla_x h_2\|_{L^4(\Tt\times\T)} \\
			&\le& c_S(\sqrt{8\pi+33\pi^2} + 8\pi + c_S\sqrt{12\pi}) \|h_1\|_{H^2(\Tt\times\T)} \|h_2\|_{H^2(\Tt\times\T)} \,,
		\end{eqnarray*}
		where we have used \eqref{Q-est1}--\eqref{Q-est3} as well as \eqref{Sobolev}. It remains to estimate $\partial_\vp^2 Q(h_1,h_2)$.
		Due to the property stated in Lemma \ref{dphiQ} we can estimate analogously to the above,
		$$
		\|\partial_\vp^2 Q(h_1,h_2)\| \le c_S(\sqrt{8\pi+33\pi^2} + 8\pi + c_S\sqrt{12\pi}) \|h_1\|_{H^2(\Tt\times\T)} \|h_2\|_{H^2(\Tt\times\T)}    \,,
		$$
		completing the proof.
	\end{proof}
	
	Lemma \ref{lem:Q-est} implies local Lipschitz continuity of $Q$, considered as a map on $H^2(\Tt\times\T) \cap \mathcal{H}$ and therefore local
	existence and uniqueness of a mild solution. Corollary \ref{cor:hypoH2} and Lemma \ref{lem:Q-est} imply
	$$
	\|h(t)\|_{H^2(\Tt\times\T)} \le Ce^{-\lambda t}\|f_I - f_0\|_{H^2(\Tt\times\T)} + C\bar Q \int_0^t e^{\lambda(s-t)}\|h(s)\|_{H^2(\Tt\times\T)}^2 \md s \,.
	$$
	It is easily checked that for 
	$$
	\|f_I - f_0\|_{H^2(\Tt\times\T)} \le \frac{\lambda}{4C^2 \bar Q} \,,
	$$
	Picard iteration preserves the inequality
	$$
	\|h(t)\|_{H^2(\Tt\times\T)} \le 2Ce^{-\lambda t}\|f_I - f_0\|_{H^2(\Tt\times\T)} \,,
	$$
	completing the proof of Theorem \ref{existence}.
	
\section{Spatially Homogeneous Equilibria}\label{sec:bif}

This section focuses on finding nonuniform, \emph{spatially homogeneous} equilibria of (\ref{model}), i.e. stationary solutions of the equation
	\begin{equation}\label{spathom}
		\begin{split}
			&\pa_t f=\mu \pa_{\vp}^2f + Q(f,f), \quad \vp \in \T, \: t>0,\\
			&f(\vp,0)=f_I(\vp),\quad \vp \in \T,
		\end{split}
	\end{equation}
and further investigate their stability. We expect the uniform equilibrium 
$$
   f_0 = \frac{M}{2\pi} \,,\qquad\mbox{with } M = \int_{\T} f_I(\vp)d\vp \,,
$$  
to be  stable for sufficiently large diffusion and intend to find other equilibria in the collision dominated regime. 
	 
We approach this problem in two ways. On the one hand, in Section \ref{bif} a formal bifurcation analysis with bifurcation parameter $\mu$ shows a 
supercritical pitchfork bifurcation away from the uniform equlibrium, producing a branch of nontrivial equilibria for $\mu$ less than a critical value $\mua$.
The nontrivial equilibria have two reflection symmetries with two opposite maxima and two opposite minima.
We restrict ourselves to the formal computations, noting that they can be made rigorous in a straightforward way, following the theory of bifurcations from a simple eigenvalue (see, e.g., \cite{cran}). 

On the other hand, we investigate the case $\mu \ll 1$. The picture we have in mind is that, as $\mu\to 0+$, the nontrivial equilibrium converges to
$$
   f_\infty(\vp) = \frac{M}{2}(\delta(\vp) - \delta(\vp-\pi)) \,,
$$
or to a rotated version. This is motivated by the fact that $f_\infty$ is an equilibrium for $\mu=0$ \cite{ourpaper}. In Section \ref{ex} we construct a formal
approximation for a nontrivial equilibrium, which is a smoothed version of $f_\infty$.

\subsection{Bifurcation from the Uniform Equilibrium}\label{bif}
	
\paragraph{Stability of the uniform equilibrium:} We start by analyzing the spectral stability of $f_0$ by linearization of (\ref{spathom}): 
	\begin{equation}\label{myxolin}
	\pa_t \af = (L + Q_M)\af \,, \qquad \int_{\T} \af \;\md \vp =0 \,,
	\end{equation}
where $\af$ is the perturbation, and we recall $L = \mu \pa_\vp^2$ and the linearization $Q_M\af = Q(f_0,\af)+Q(\af,f_0)$ of $Q$ around $f_0$. 
For the collision kernel, the model $b(\vp,\vpa)= |\sin(\vp-\vpa)|$ for rod-shaped bacteria will be used in this section (see, however, the remark at 
the end of the section). The Fourier series expansion
	\begin{align}\label{fourierseries}
	\af(\vp,t) &=\sum_{n=1}^\infty a_n(t) \cos{(n\vp)} +\sum_{n=1}^\infty b_n(t) \sin{(n\vp)} 
	\end{align}
diagonalizes the problem and leads to
	\begin{align*}
		\dot a_n = \lambda_n a_n \,,\qquad \dot b_n = \lambda_n b_n \,,\qquad n\ge 1 \,,
	\end{align*}
with the eigenvalues 
	$$\lambda_1 = -\mu - \frac{f_0}{3}\left(4\sqrt{2} - 1\right) \,,$$
	$$\lambda_2 = -4\mu + \frac{2f_0}{3} \,,$$
	$$\lambda_n = -n^2\mu + 2f_0\left(\frac{4n\sin{(n\pi/4)}-8}{n^2-4} + \frac{n\sin{(n\pi/2)}+(-1)^n}{n^2-1}+(-1)^n-2\right)\,,\quad n>2\,.$$
It is easily checked that $\lambda_n <0$ for $n\ne 2$ and for all $\mu >0$. Thus, by the sign of $\lambda_2$, the uniform equilibrium is spectrally
stable for 
\begin{equation}\label{mua}
    \mu \ge \mua := \frac{f_0}{6} \,,
\end{equation}
and spectrally unstable for $\mu < \mua$.

\paragraph{Pitchfork bifurcation:}
For understanding the nature of the steady-state bifurcation at $\mu=\mua$ we observe that the problem has both a rotation symmetry 
($\vp\leftrightarrow \vp+\vp_0$ with arbitrary $\vp_0$) and a flip symmetry ($\vp\leftrightarrow -\vp$). The rotation symmetry is the reason for the double 
eigenvalues in the previous section. It can be eliminated by the additional auxiliary condition 
$$
   f(\vp=0)=f_0 \,. 
$$
This makes the eigenvalues (in particular $\lambda_2$) simple, and we can expect a branch of bifurcating solutions \cite{cran}. Because of the flip 
symmetry the generic bifurcation to be expected is a pitchfork. We shall construct a supercritical pitchfork bifurcation with a bifurcating branch for
$\mu\le\mua$, and therefore make the ansatz
	\begin{align}\label{pitchforkansatz}
	\mu = \mua -\delta^2 \,, \qquad
	f(\vp) = f_0 + \delta f_1(\vp) + \delta^2 f_2(\vp) + \delta^3 f_3(\vp) + \mathcal{O}\left(\delta^4\right) \,, 
	\end{align}
with $0<\delta\ll 1$. The corrections have to satisfy the additional auxiliary condition and mass conservation:
\begin{equation}\label{fk-aux}
    f_k(0)=0 \,,\qquad \int_{\T} f_k \md \vp = 0 \,,\qquad k\ge 1\,.
\end{equation}
Substitution of \eqref{pitchforkansatz} in the stationary version of \eqref{spathom} yields
	\begin{eqnarray}\label{deltas}
	0 &=&(L^* + Q_M)f_1 + \delta \left((L^*+Q_M)f_2 + Q(f_1,f_1) \right) \notag \\
	&& + \delta^2 \left((L^*+Q_M)f_3-\pa_{\vp}^2 f_1 + Q(f_1,f_2) + Q(f_2,f_1) \right) + \mathcal{O}(\delta^3)  \,,
	\end{eqnarray}
with $L^* := \mua \pa_\vp^2$.

\begin{lemma}
The null space of $L^*+Q_M$ subject to (\ref{fk-aux}) is one-dimensional and spanned by $\sin(2\vp)$. The solvability condition for the 
equation $(L^*+Q_M)f = g$ is
$$
    \int_{\T} g\,\sin(2\vp)\md\vp = 0 \,.
$$
\end{lemma}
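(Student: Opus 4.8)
The plan is to exploit the fact, established in the preceding eigenvalue computation, that $L^*+Q_M$ is diagonalized by the real Fourier basis. First I would record that, by the rotation invariance $b(\vp+\alpha,\vpa+\alpha)=b(\vp,\vpa)$, the functions $\cos(n\vp)$ and $\sin(n\vp)$ are eigenfunctions of $L^*+Q_M$ with one and the same real eigenvalue $\lambda_n$. Since these form an orthogonal basis of the zero-mean space, $L^*+Q_M$ is self-adjoint on $\{f\in L^2(\T):\int_\T f\,\md\vp=0\}$. Evaluating the $\lambda_n$ at $\mu=\mua=f_0/6$ gives $\lambda_2=0$, while $\lambda_n\neq 0$ for every $n\neq 2$ (indeed $\lambda_n<0$ there). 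Hence the null space of $L^*+Q_M$ on the zero-mean space is exactly the two-dimensional eigenspace $\spn\{\cos(2\vp),\sin(2\vp)\}$.

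For the first assertion I would then impose the auxiliary condition \eqref{fk-aux}. A general kernel element $a\cos(2\vp)+b\sin(2\vp)$ has zero mean automatically, and the point constraint $f(0)=0$ reads $a=0$, since $\cos 0=1$ and $\sin 0=0$. This leaves $\spn\{\sin(2\vp)\}$, proving that the null space subject to \eqref{fk-aux} is one-dimensional and spanned by $\sin(2\vp)$.

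For the solvability statement I would invoke the Fredholm alternative for the self-adjoint operator: $(L^*+Q_M)f=g$ has a zero-mean solution if and only if $g$ is orthogonal to the kernel, i.e. $\int_\T g\cos(2\vp)\,\md\vp=\int_\T g\sin(2\vp)\,\md\vp=0$, and given such $g$ the $\cos(2\vp)$-component of $f$ can be adjusted to meet $f(0)=0$. The point to explain is that the obstruction along $\cos(2\vp)$ never actually constrains the construction. This rests on the reflection ($\vp\to-\vp$) symmetry: $L^*+Q_M$ preserves parity, and a short computation using the rotation and reflection symmetries of $b$ shows that $Q$ sends (odd,odd) and (even,even) inputs to even outputs and (odd,even) inputs to odd outputs. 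Since the leading corrector is $f_1=\sin(2\vp)$ (odd) and $L^*$ preserves parity, the right-hand sides $g$ generated at successive orders of \eqref{deltas} have definite, alternating parity; the inhomogeneity at the order that pins down the bifurcation is odd and is therefore orthogonal to $\cos(2\vp)$ for free. Equivalently, $\cos(2\vp)=\tfrac12\pa_\vp\sin(2\vp)$ is the rotation direction, and the rotational gauge fixed by $f(0)=0$ absorbs precisely this obstruction, leaving the single condition $\int_\T g\sin(2\vp)\,\md\vp=0$.

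The kernel identification and the point-evaluation argument are routine, following directly from the already-computed spectrum. I expect the main obstacle to be the bookkeeping around the two-dimensional kernel: because the rotation symmetry forces $\lambda_2$ to be a double eigenvalue, the naive Fredholm condition is two-dimensional, and one must argue carefully — via the parity behaviour of $Q$ together with the phase condition $f(0)=0$ — that the even obstruction along $\cos(2\vp)$ holds automatically, so that the effective solvability condition reduces to the single scalar equation stated in the lemma.
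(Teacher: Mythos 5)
Your proposal is correct and, in its core, follows the same route as the paper: the kernel is read off from the Fourier eigenvalue computation of the preceding section ($\lambda_2=0$ at $\mu=\mua$, $\lambda_n<0$ for $n\ne 2$), with the auxiliary condition $f(0)=0$ eliminating the $\cos(2\vp)$-direction, and the solvability condition follows from self-adjointness of $L^*+Q_M$ via the Fredholm alternative. The paper's proof is exactly this, stated in three lines; the only cosmetic difference is that the paper verifies symmetry of $Q_M$ by a direct computation, whereas you deduce it from the diagonalization in an orthogonal basis with real eigenvalues, which is equally valid. Where you go beyond the paper is in confronting the second, even obstruction $\int_\T g\cos(2\vp)\md\vp=0$, which the paper silently drops. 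Your instinct that this needs an argument is right: fixing the phase by $f(0)=0$ removes $\cos(2\vp)$ from the kernel but cannot enlarge the range, since $(L^*+Q_M)\cos(2\vp)=0$ means that adjusting the $\cos(2\vp)$-component of $f$ never compensates a $\cos(2\vp)$-component of $g$; for this reason your closing remark that ``the rotational gauge absorbs precisely this obstruction'' is a heuristic, not a proof. Moreover, your parity argument only does half the job: it makes odd inhomogeneities (the one in the $\mathcal{O}(\delta^2)$-equation) orthogonal to $\cos(2\vp)$ for free, but the $\mathcal{O}(\delta)$-inhomogeneity $Q(f_1,f_1)$ is \emph{even}, and there orthogonality to $\cos(2\vp)$ comes from the explicit computation $Q(f_1,f_1)\propto\cos(4\vp)$, i.e.\ from Fourier orthogonality rather than parity. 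Since the lemma is only ever applied to these explicitly computed right-hand sides, this looseness does not invalidate your argument as used in the bifurcation construction; it just means the fully precise statement would either list both orthogonality conditions or restrict $g$ to the symmetry class actually arising in \eqref{deltas} --- a point on which your write-up is, if anything, more candid than the paper.
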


\begin{proof}
The result on the null space is a consequence of the computations in the previous section and of the observation that the $\cos(2\vp)$-contribution
is eliminated by the additional auxiliary condition. A straightforward computation shows that $Q_M$ is symmetric with respect to the $L^2$ scalar product,
and so is of course $L^*$, completing the proof.
\end{proof}

Equation (\ref{deltas}) with $\delta=0$ implies
$$
   f_1(\vp) = b \sin(2\vp) \,,
$$
with $b\in\R$ still to be determined. By a straightforward computation the inhomogeneity in the $\mathcal{O}(\delta)$-equation is given by
$$
   Q(f_1,f_1) = -\frac{4}{3}b^2 \cos(4\vp) \,,
$$
which satisfies the solvability condition for
$$
   (L^*+Q_M)f_2 + Q(f_1,f_1) = 0 \,.
$$
The computations in the previous section show that $(L^*+Q_M)\cos(4\vp) = \lambda_4^* \cos(4\vp)$ with
$$
    \lambda_4^* = \lambda_4 \bigm|_{\mu=\mua} = -\frac{88}{15}f_0 \,.
$$
Therefore, considering (\ref{fk-aux}), we obtain
$$
   f_2(\vp) = \frac{5b^2}{22 f_0} (\cos(2\vp)-\cos(4\vp)) \,.
$$
The final computation is the evaluation of the solvability condition 
$$
    \int_{\T} \left(-\pa_{\vp}^2 f_1 + Q(f_1,f_2) + Q(f_2,f_1)\right)\sin(2\vp)\md\vp = 0
$$
for the $\mathcal{O}(\delta^2)$-equation, which gives
$$
     b\left(\frac{440 f_0}{15} - b^2\right) = 0 \,.
$$
The nontrivial solutions determine the bifurcating branch
$$
   f(\vp) = f_0 \pm \sqrt{\frac{440 f_0(\mua-\mu)}{15}} \sin(2\vp) + \mathcal{O}(\mua-\mu) \,,\qquad \mu\le\mua \,,
$$
of nontrivial equilibria. Supercriticality implies stability of the bifurcating branch (see the bifurcation diagram in Figure \ref{pitchfork}).
Recalling mass conservation and rotational symmetry we obtain for each $0<\mua-\mu\ll 1$ a two-dimensional set of nontrivial equilibria
of the form
$$
   f(\vp) = \frac{M}{2\pi} + \sqrt{\frac{220 M(\mua-\mu)}{15\pi}} \sin(2(\vp-\vp_0)) + \mathcal{O}(\mua-\mu) \,,
$$
parametrized by the mass $M\ge 0$ and the rotation angle $\vp_0\in [0,\pi)$. Note that the different signs in the second term can be realized by rotation 
by $\pi/2$.
	
\begin{figure}[H]
	\centering
	\includegraphics[width=0.5\textwidth]{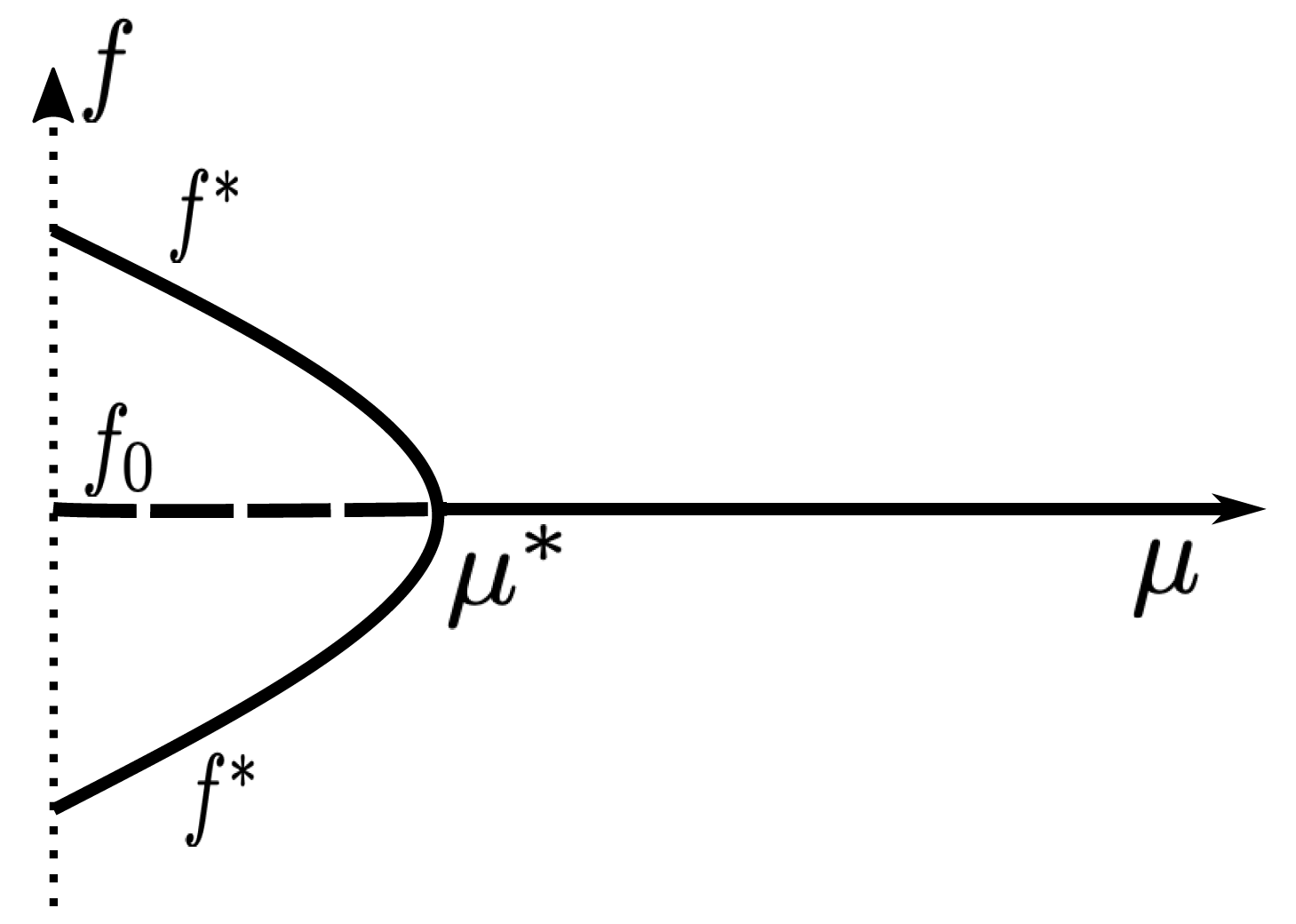}
	\caption{Bifurcation diagram of the supercritical pitchfork bifurcation. Solid lines represent stable branches, 
	      while the dashed line represents the unstable region of the uniform equilibrium.}
	\label{pitchfork}
\end{figure}

\begin{rem}
The above calculations have been carried out for $b(\vp,\vpa)=|\sin(\vp-\vpa)|$, modelling the case of rod-shaped myxobacteria. 
It is easily checked that the important property that the linearized operator is diagonalized by Fourier decomposition holds for every model
of the physically reasonable form $b(\vp,\vpa)=\hat b(|\vp-\vpa|)$.
In particular, the same type of bifurcation result holds for Maxwellian myxos, i.e. $b \equiv 1$, with the bifurcation value 
$$
   \mua_{Maxwell} = f_0\left(1-\frac{\pi}{4}\right) > \mua \,.
$$
\end{rem}

\subsection{Equilibria for Small Diffusivity}\label{ex}
	
In \cite{ourpaper} it has been shown that the set of nontrivial equilibria of $Q$ is three-dimensional and of the form
\begin{equation}\label{f-infty}
   f_\infty(\vp) :=\rho_+ \delta(\vp-\vp_0) +\rho_-\delta(\vp-\vp_0^\downarrow) \,,
 \end{equation}
 with arbitrary $\vp_0\in\T$ and $\rho_+,\rho_-\ge 0$, satisfying $\rho_+ + \rho_- = M$. On the other hand, in the previous section we have found a manifold
 of equilibria of $\mu\pa_\vp^2 + Q$, which is two-dimensional for each $\mu$ smaller than and close to $\mu^*$. The question is: Can these results be
 connected by the limit $\mu\to 0+$? Motivated by the fact that the bifurcating equilibria of the previous section have two symmetric maxima, our conjecture
 is the following: {\em The manifold of nontrivial equilibria starting at the bifurcation at $\mu=\mua$ can be extended to arbitrarily small $\mu>0$. Its limit as
 $\mu\to 0+$ is the family}
 \begin{equation}\label{f-infty-sym}
   f_\infty(\vp) :=\frac{M}{2} \delta(\vp-\vp_0) + \frac{M}{2}\delta(\vp-\vp_0^\downarrow) \,,\qquad M\ge 0 \,,\quad \vp_0\in\T \,.
 \end{equation}
 {\em For small $\mu$, $\mu\pa_\vp^2 + Q$ possesses a three-dimensional family of metastable states close to (\ref{f-infty}).}
 
So far we cannot prove any of this, but in the remainder of this section we shall present a first small step: We shall prove the existence of a formal 
approximation for equilibria of $\mu\pa_\vp^2 + Q$ close to (\ref{f-infty-sym}) for the case of Maxwellian myxos. Some more evidence will be provided 
by the numerical simulations presented in the following section.
 
The stationary equation with $b(\vp,\vpa) \equiv 1$ can be written as
\begin{align}\label{stamax}
	0 =\mu \pa_{\vp}^2f(\vp) + 2 \int_{\vp-\frac{\pi}{4}}^{\vp+\frac{\pi}{4}} f(2\vp-\vpa) f(\vpa) \; \md \vpa 
	+ f(\vp+\pi) \int_{\vp-\frac{\pi}{2}}^{\vp+\frac{\pi}{2}} f(\vpa) \; \md \vpa -M f(\vp).
\end{align}
We look for a reflection symmetric solution ($f(\vp) = f(\vp + \pi)$) with mass concentrated around $\vp=0$ and $\vp=\pi$ (close to (\ref{f-infty-sym}) with
$\vp_0=0$). Concentrating on the peak at $\vp=0$, we introduce the new unknown $F(\xi)$ by the scaling
$$
    \vp = \sqrt{\frac{2\mu}{M}} \,\xi \,,\qquad f = \frac{M}{2}\sqrt{\frac{M}{2\mu}} \,F \,,
$$
and rewrite (\ref{stamax}) as
$$
   0 =  \pa_\xi^2 F(\xi) + 2 \int_{\xi-\frac{\pi}{4}\sqrt{\frac{M}{2\mu}}}^{\xi+\frac{\pi}{4}\sqrt{\frac{M}{2\mu}}} F(2\xi-\xia)F(\xia)\md\xia
    + F(\xi) \left( \frac{2}{M}\int_{\xi\sqrt{\frac{2\mu}{M}} - \frac{\pi}{2}}^{\xi\sqrt{\frac{2\mu}{M}} + \frac{\pi}{2}} f(\vpa)\md\vpa - 2\right)
$$
Since, by the symmetry assumption, 
$$
    \int_{-\frac{\pi}{2}\sqrt{\frac{M}{2\mu}}}^{\frac{\pi}{2}\sqrt{\frac{M}{2\mu}}} F(\xia)\md\xia = \frac{2}{M} \int_{-\pi/2}^{\pi/2} f(\vpa)\md\vpa = 1 \,,
$$
holds, the limit $\mu\to 0$ gives
\begin{equation}\label{F-equ}
   0 =  \pa_\xi^2 F(\xi) + 2 \int_{-\infty}^\infty F(2\xi-\xia)F(\xia)\md\xia - F(\xi) \,,\qquad \int_{-\infty}^\infty F(\xi)\md\xi = 1 \,.
\end{equation}
With the Green's function of $\pa_\xi^2 -$id we can rewrite \eqref{F-equ} as the fixed point problem
$$
   F = \mathcal{S}(F) \qquad\mbox{with}\quad
   \mathcal{S}(F)(\xi) := \int_{-\infty}^\infty \int_{-\infty}^\infty e^{-|\xi-\tilde\xi|} F(2\tilde\xi -\xia)F(\xia)\md\xia \md\tilde{\xi} \,. 
$$
We claim that $\mathcal{S}$ maps the set
\begin{align*}
    \mathcal{B} := \left\{F \in L_+^1(\R)\cap C_B(\R):\, F(\xi)=F(-\xi) \,,\,\int_{-\infty}^\infty F(\xi)\md\xi = 1 \,,\, \int_{-\infty}^\infty \xi^2 F(\xi) \md \xi = 4 \right\} \,.
\end{align*}
into itself. For $F\in \mathcal{B}$ we obviously have $\mathcal{S}(F)\ge 0$, $\mathcal{S}(F)(\xi) = \mathcal{S}(F)(-\xi)$,  and
$$
   \int_{-\infty}^\infty \mathcal{S}(F)(\xi)\md\xi = 2\int_{-\infty}^\infty \int_{-\infty}^\infty F(2\tilde\xi -\xia)F(\xia)\md\xia \md\tilde{\xi} 
   = \int_{-\infty}^\infty \int_{-\infty}^\infty F(\hat\xi)F(\xia)\md\xia \md\hat\xi = 1  \,.
$$
A preliminary computation for the evaluation of the variance is
$$
   \int_{-\infty}^\infty \xi^2 e^{-|\xi-\tilde\xi|} \md\xi = 4 + 2\tilde\xi^2 \,,
$$
implying
\begin{eqnarray*}
   \int_{-\infty}^\infty \xi^2 \mathcal{S}(F)(\xi) \md\xi &=& 4\int_{-\infty}^\infty \int_{-\infty}^\infty F(2\tilde\xi -\xia)F(\xia)\md\xia \md\tilde{\xi} + 
     2 \int_{-\infty}^\infty \int_{-\infty}^\infty \tilde\xi^2 F(2\tilde\xi -\xia)F(\xia)\md\xia \md\tilde{\xi} \\
   &=& 2 + \frac{1}{4} \int_{-\infty}^\infty \int_{-\infty}^\infty (\xia + \hat\xi)^2 F(\hat\xi)F(\xia) \md\xia \md\hat\xi \\
   &=& 2 + \frac{1}{2} \int_{-\infty}^\infty \int_{-\infty}^\infty (\xia^2 + \xia\hat\xi) F(\hat\xi)F(\xia) \md\xia \md\hat\xi = 4 \,,
\end{eqnarray*}
where the evenness of $F$ has been used in the last equality. Finally, for any $\xi\in\R$,
$$
    \mathcal{S}(F)(\xi) \le \int_{-\infty}^\infty \int_{-\infty}^\infty F(2\tilde\xi -\xia)F(\xia)\md\xia \md\tilde{\xi} = \frac{1}{2}\,,
$$
whence $\mathcal{S}:\, \mathcal{B}\to\mathcal{B}$, since the uniform continuity of $\mathcal{S}(F)$ is obvious. 

\begin{lemma}
With the above definitions, the set $\mathcal{S}(\mathcal{B})$ is relatively compact in $C_B(\R)$.
\end{lemma}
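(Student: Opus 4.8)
The plan is to verify the three hypotheses of the Arzelà--Ascoli theorem in the form appropriate for the sup norm on $C_B(\R)$: uniform boundedness, uniform equicontinuity, and uniform decay at infinity. The first is already in hand, since we have shown $0\le \mathcal{S}(F)(\xi)\le \tfrac12$ for every $F\in\mathcal{B}$ and every $\xi\in\R$. The limit points will then automatically lie in $C_0(\R)\subset C_B(\R)$, so relative compactness in $C_B(\R)$ is the issue at stake.

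For equicontinuity I would exploit the regularity of the Green's kernel. Since $\xi\mapsto e^{-|\xi|}$ is $1$-Lipschitz, we have $|e^{-|\xi-\tilde\xi|}-e^{-|\eta-\tilde\xi|}|\le|\xi-\eta|$, whence
\begin{equation*}
|\mathcal{S}(F)(\xi)-\mathcal{S}(F)(\eta)| \le |\xi-\eta| \int_{-\infty}^\infty\!\!\int_{-\infty}^\infty F(2\tilde\xi-\xia)F(\xia)\md\xia \md\tilde{\xi} = \frac{|\xi-\eta|}{2} \,,
\end{equation*}
where the value $\tfrac12$ of the double integral comes from the same change of variables $(\tilde\xi,\xia)\mapsto(2\tilde\xi-\xia,\xia)$ already used for the normalization of $\mathcal{S}(F)$. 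Thus the whole family $\{\mathcal{S}(F):F\in\mathcal{B}\}$ is uniformly Lipschitz with constant $\tfrac12$, which is considerably stronger than equicontinuity.

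The main point, and the step I expect to be the real obstacle, is uniform decay at infinity: for every $\varepsilon>0$ there must exist $R>0$ with $\mathcal{S}(F)(\xi)<\varepsilon$ whenever $|\xi|>R$, uniformly in $F\in\mathcal{B}$. The difficulty is that the uniform second-moment bound $\int_{\R}\xi^2\mathcal{S}(F)\md\xi=4$ controls, via Chebyshev's inequality, only the $L^1$-mass far out, whereas a pointwise sup bound is required. The idea is to upgrade the mass bound to a pointwise bound using the Lipschitz estimate above. Concretely, fix $\xi_0$ with $|\xi_0|=r\ge1$ and set $a:=\mathcal{S}(F)(\xi_0)$; since $a\le\tfrac12\le r/2$, the Lipschitz bound forces $\mathcal{S}(F)\ge a/2$ on the interval $[\xi_0-a,\xi_0+a]$, which is contained in $\{|\xi|\ge r/2\}$. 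Hence $4=\int_{\R}\xi^2\mathcal{S}(F)\md\xi\ge \tfrac{a}{2}\cdot\tfrac{r^2}{4}\cdot 2a=\tfrac{a^2r^2}{4}$, giving $\mathcal{S}(F)(\xi_0)=a\le 4/|\xi_0|$. This rate is independent of $F$, so $\mathcal{S}(F)(\xi)\to0$ as $|\xi|\to\infty$ uniformly over $\mathcal{B}$.

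With uniform boundedness, uniform equicontinuity, and equi-decay at infinity established, the generalized Arzelà--Ascoli theorem for $C_0(\R)$ (these three properties imply relative compactness in the sup norm) shows that $\mathcal{S}(\mathcal{B})$ is relatively compact in $C_B(\R)$, which completes the proof. The only genuinely nontrivial ingredient is the conversion of the variance control into uniform pointwise decay, and this is exactly where the uniform Lipschitz bound is indispensable.
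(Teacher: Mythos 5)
Your proof is correct and follows essentially the same route as the paper: both rest on the uniform Lipschitz bound $|\mathcal{S}(F)(\xi_1)-\mathcal{S}(F)(\xi_2)|\le\frac{1}{2}|\xi_1-\xi_2|$ and on upgrading the uniform second-moment bound to uniform pointwise decay $\mathcal{S}(F)(\xi)=O(1/|\xi|)$ via that Lipschitz estimate (the paper's triangle argument yields $\sqrt{2}/X$, your two-sided interval yields $4/|\xi_0|$ --- same mechanism, different constant). The only cosmetic difference is that you invoke the standard compactness criterion in $C_0(\R)$ (boundedness, equicontinuity, equi-decay), whereas the paper carries out the Arzel\'a--Ascoli diagonal argument explicitly and verifies by hand that the locally uniform convergence is uniform on all of $\R$.
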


\begin{proof}
By the estimate
$$
   |\mathcal{S}(F)(\xi_1) - \mathcal{S}(F)(\xi_2)| \le \int_{-\infty}^\infty \int_{-\infty}^\infty |\xi_1 - \xi_2| F(2\tilde\xi -\xia)F(\xia)\md\xia \md\tilde{\xi} 
   = \frac{1}{2} |\xi_1 - \xi_2| \,,
$$
$\mathcal{S}(\mathcal{B})$ is equi-Lipschitz-continuous. By the Arzel\'a-Ascoli theorem, a sequence $\{F_n\}\subset \mathcal{S}(\mathcal{B})$ possesses 
for every compact set $K\subset\R$ a subsequence, which converges uniformly on $K$. The standard diagonal procedure produces one subsequence
$\{G_n\}\subset \{F_n\}$, such that $G_n\to G$ pointwise in $\R$ and uniformly on each compact set. It remains to prove that the convergence is uniform 
on $\R$. 

Let $\xi_0\ge X>0$. Then, as a consequence of the Lipschitz continuity,
$$
    F(\xi) \ge F(\xi_0) - \frac{1}{2}(\xi-\xi_0) \,,\qquad\mbox{for } \xi_0 \le \xi \le \xi_0 + 2F(\xi_0) \,.
$$
Thus,
$$
    F(\xi_0)^2 = \int_{\xi_0}^{\xi_0 + 2F(\xi_0)} \left( F(\xi_0) - \frac{1}{2}(\xi-\xi_0)\right)\md\xi \le \int_X^\infty F(\xi)\md\xi
    \le \int_0^\infty \frac{\xi^2}{X^2} F(\xi)\md\xi = \frac{2}{X^2}
$$
With the analogous estimate for $\xi_0 \le -X$ we have
$$
     F(\xi) \le \frac{\sqrt{2}}{X} \,,\qquad\mbox{for}\quad F\in\mathcal{S}(\mathcal{B}) \,,\quad |\xi| \ge X \,.
$$
The same is true for the pointwise limit $G$ of $\{G_n\}$ and, thus,
$$
   \sup_{\R}|G_n-G| = \max\left\{ \frac{2\sqrt{2}}{X}\,,\, \sup_{(-X,X)} |G_n-G| \right\}\,,
$$
which can be made arbitrarily small by choosing first $X$ and then $n$ sufficiently large.
\end{proof}

\begin{theorem}
Problem \eqref{F-equ} has a nonnegative smooth solution satisfying
$$
   \int_{-\infty}^\infty \xi^2 F(\xi) \md \xi \leq 4 \,.
$$
\end{theorem}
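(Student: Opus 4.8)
The plan is to solve the fixed point problem $F=\mathcal S(F)$ by Schauder's theorem, the previous lemma already supplying the needed compactness. The one genuine difficulty is that $\mathcal B$ is \emph{not} closed in $C_B(\R)$: a uniform limit of elements of $\mathcal B$ need retain neither unit mass nor second moment $4$. I would therefore run Schauder not on $\mathcal B$ but on the enlarged set
\[
   \tilde{\mathcal B}:=\Bigl\{F\in L^1_+(\R)\cap C_B(\R):\ F(\xi)=F(-\xi),\ \int_{-\infty}^\infty F\,\md\xi=1,\ \int_{-\infty}^\infty \xi^2 F\,\md\xi\le 4\Bigr\}\,,
\]
in which only the variance is relaxed to an inequality. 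This set is convex (all constraints are convex, the mass one linear) and contains $\mathcal B$, hence is nonempty. Its closedness in $C_B(\R)$ is where the second-moment bound earns its keep: for $F\in\tilde{\mathcal B}$ one has the uniform tail bound $\int_{|\xi|>X}F\,\md\xi\le X^{-2}\int\xi^2F\,\md\xi\le 4X^{-2}$, so a $C_B$-convergent sequence in $\tilde{\mathcal B}$ is uniformly integrable at infinity and converges in $L^1$ as well, which preserves the normalization $\int F=1$ in the limit, while Fatou's lemma preserves $\int\xi^2F\le 4$. Enforcing mass exactly $1$ is also what rules out the spurious fixed point $F\equiv 0$.

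I would then check the Schauder hypotheses on $\tilde{\mathcal B}$. The self-mapping property $\mathcal S(\tilde{\mathcal B})\subseteq\tilde{\mathcal B}$ follows from the moment computations already carried out above: for $F\in\tilde{\mathcal B}$ with mass $m=1$ and second moment $V\le 4$ one obtains $\int\mathcal S(F)=m^2=1$ and $\int\xi^2\mathcal S(F)=2m^2+\tfrac12 mV=2+\tfrac V2\le 4$, and evenness and nonnegativity are immediate. Relative compactness of $\mathcal S(\tilde{\mathcal B})$ is obtained exactly as in the preceding lemma: each $\mathcal S(F)$ is $\tfrac12$-Lipschitz and, lying in $\tilde{\mathcal B}$, enjoys the uniform decay $\mathcal S(F)(\xi)\le\sqrt2/|\xi|$ for large $|\xi|$, so Arzel\`a--Ascoli together with the uniform tails gives precompactness. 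Continuity of $\mathcal S$ I would get from the bilinear $L^1$ bound $\|\mathcal S(F)-\mathcal S(G)\|_\infty\le\tfrac12\|F-G\|_1(\|F\|_1+\|G\|_1)\le\|F-G\|_1$ (bounding the kernel by $1$), combined with the fact, noted above, that on $\tilde{\mathcal B}$ uniform convergence upgrades to $L^1$ convergence.

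Schauder's theorem then yields $F\in\tilde{\mathcal B}$ with $F=\mathcal S(F)$; since $F\in\tilde{\mathcal B}$ it is nonnegative, even, of unit mass, and obeys $\int\xi^2F\le 4$. As $\tfrac12 e^{-|\cdot|}$ is the Green's function of $1-\pa_\xi^2$, the relation $F=\mathcal S(F)$ is equivalent to $(1-\pa_\xi^2)F=2\int_{-\infty}^\infty F(2\xi-\xia)F(\xia)\,\md\xia$, i.e.\ to \eqref{F-equ} together with $\int F=1$. Smoothness follows by bootstrapping: writing $N(F)(\xi):=2\int_{-\infty}^\infty F(2\xi-\xia)F(\xia)\,\md\xia$, differentiation under the integral (justified by the $L^1\cap L^\infty$ bounds) shows $N(F)$ inherits $C^k$ regularity from $F$, and $\pa_\xi^2F=F-N(F)$ then promotes $F$ from $C^k$ to $C^{k+2}$; starting from $F\in C_B$ this gives $F\in C^\infty$.

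The main obstacle is precisely the interplay flagged in the first paragraph: the natural invariant set $\mathcal B$ is not closed, and a careless relaxation (weakening the mass constraint to $\int F\le 1$) would let Schauder return the trivial solution $F\equiv 0$, which violates \eqref{F-equ}. The resolution is that the uniform second-moment bound furnishes uniform integrability at infinity, which is exactly what makes the mass functional continuous along uniformly convergent sequences and thus keeps the normalization alive in the closure; the same weight $\xi^2$ is, however, only lower semicontinuous under these limits, which is why the conclusion can assert $\int\xi^2F\le 4$ but not equality.
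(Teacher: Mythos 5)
Your proposal is correct and follows essentially the same route as the paper: Schauder's fixed point theorem applied to $\mathcal{S}$ on (a variant of) $\mathcal{B}$, with the same moment computations for the self-mapping property, the same Lipschitz-plus-tail-decay compactness argument, and an equivalent continuity estimate (yours via the bilinear $L^1$ bound together with the uniform-to-$L^1$ upgrade on the invariant set, the paper's directly in the sup norm, $|\mathcal{S}(F_1)-\mathcal{S}(F_2)|\le 4\sup_\R|F_1-F_2|$), followed by the same bootstrap for smoothness. Your replacement of $\mathcal{B}$ by the closed convex set $\tilde{\mathcal{B}}$ with the variance relaxed to an inequality is a careful formalization of the point the paper disposes of in a single sentence --- that boundedness of the variance implies tightness, so the fixed point is a probability density satisfying the variance bound --- and it makes explicit why the theorem can only assert $\int_{-\infty}^\infty \xi^2 F \,\md\xi \le 4$ rather than equality.
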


\begin{proof}
For an application of the Schauder fixed point theorem it remains to prove continuity of $\mathcal{S}$ with respect to the supremum norm: For $F_1,F_2\in\mathcal{B}$,
\begin{eqnarray*}
   |\mathcal{S}(F_1)(\xi) - \mathcal{S}(F_2)(\xi)| &\le&  \int_{-\infty}^\infty \int_{-\infty}^\infty e^{-|\xi-\tilde\xi|} 
  \Bigl( \left|F_1(2\tilde\xi -\xia)-F_2(2\tilde\xi-\xia)\right|F_1(\xia) \\
  && \hskip 3cm + \left|F_1(\xia)- F_2(\xia)\right| F_2(2\tilde\xi-\xia)\Bigr)\md\xia \md\tilde{\xi} \\
  &\le& \sup_{\R}|F_1-F_2| \int_{-\infty}^\infty \int_{-\infty}^\infty e^{-|\xi-\tilde\xi|} \left( F_1(\xia) + F_2(2\tilde\xi-\xia)\right)\md\xia \md\tilde\xi  \\
   &=& 4\sup_{\R}|F_1-F_2| \,.
\end{eqnarray*}
An application of the Schauder theorem shows that $\mathcal{S}$ has a Lipschitz continuous fixed point. The boundedness of the variance implies 
tightness and therefore the fixed point is a probability density, satisfying the upper bound of the variance. It is easily seen that the map
$\xi\mapsto \int_{-\infty}^\infty F(2\xi-\xia)F(\xia)\md\xia$ is Lipschitz continuous and therefore the differential equation \eqref{F-equ} implies $F\in C^2(\R)$.
Bootstrapping gives higher regularity.
\end{proof}

A formal approximation as $\mu\to 0+$ for an equilibrium can now be given as
$$
     f(\vp) \approx \left\{ \begin{array}{ll}  \frac{M}{2}\sqrt{\frac{M}{2\mu}} \,F\left(\vp\sqrt{\frac{M}{2\mu}}\right) \,, & |\vp| \le \frac{\pi}{2} \,,\\
     \frac{M}{2}\sqrt{\frac{M}{2\mu}} \,F\left((\vp-\pi)\sqrt{\frac{M}{2\mu}}\right) \,, & |\vp-\pi| \le \frac{\pi}{2}\,. \end{array} \right.     
$$
The rigorous justification remains open.

\section{Numerical Simulations with the Spatially Homogeneous Model}\label{sec:num}
	
\paragraph{Discretization:}
The results of the preceding section will be illustrated by numerical simulations with the spatially homogeneous model (\ref{spathom}). Discretization in the angular direction is based on an equidistant grid
	$$
	\vp_k = \frac{k\pi}{n} \,,\qquad k \in \mathbb{Z}_{2n} \,,
	$$
with an even number of grid points, representing a discrete torus. The collision operator is approximated by quadrature, chosen such that 
mass is conserved and post-collisional states are on the grid (see \cite[Section 5]{ourpaper} for details). Diffusion is discretized by the standard three-point scheme,
and the explicit Euler scheme is used for the time discretization, such that mass conservation is guaranteed.	
The scheme has been implemented in \textsc{Matlab}.

All simulations have been carried out with $n=51$ and with time steps satisfying a parabolic CFL condition. This has not been too restrictive since only
rather small values for the diffusivity $\mu$ have been used. The mass has been normalized, i.e. $M=1$, leading to the bifurcation value (see \eqref{mua})
$$
    \mua= \frac{1}{12\pi} \approx 0.0265 \,.
$$ 
The plots in the figures below show distributions initially (red dotted lines), at an intermediate time (blue dashed lines), and at the end of the simulation time
(black solid lines), the latter typically close to an equilibrium state. 
	
\paragraph{Simulations in the bifurcation regime:}
First we show simulations with values of the diffusivity $\mu$ just below and just above the bifurcation value $\mua$.

In Figure \ref{bif1} the initial data have been chosen as random perturbations of the constant equilibrium, which is stable for $\mu>\mua$ (right), and
unstable for $\mu<\mua$ (left).  In the latter case, the solution converges to a nonuniform steady state with peaks centered around two unpredictable, but always opposite points. 

In Figure \ref{bif2} the constant equilibrium is initially perturbed only at one grid point. The results are as above, except that the nonuniform steady state
is not quite reached at the end of the simulation time, since the growth of the small initial perturbation takes much longer than in the first experiment.

\begin{figure}[H]
		\centering
		\begin{subfigure}{0.48\textwidth} 
			\includegraphics[width=\textwidth]{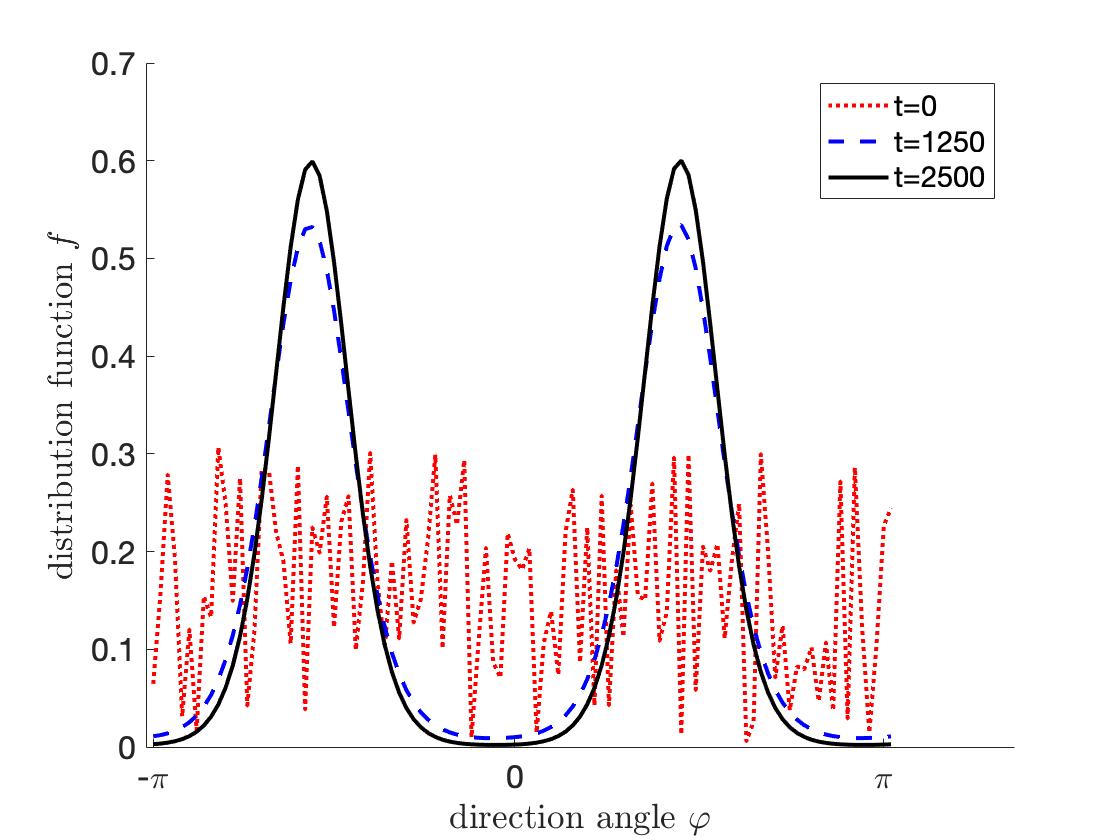}	
		\end{subfigure}
		\hspace{1em} 
		\begin{subfigure}{0.48\textwidth} 
			\includegraphics[width=\textwidth]{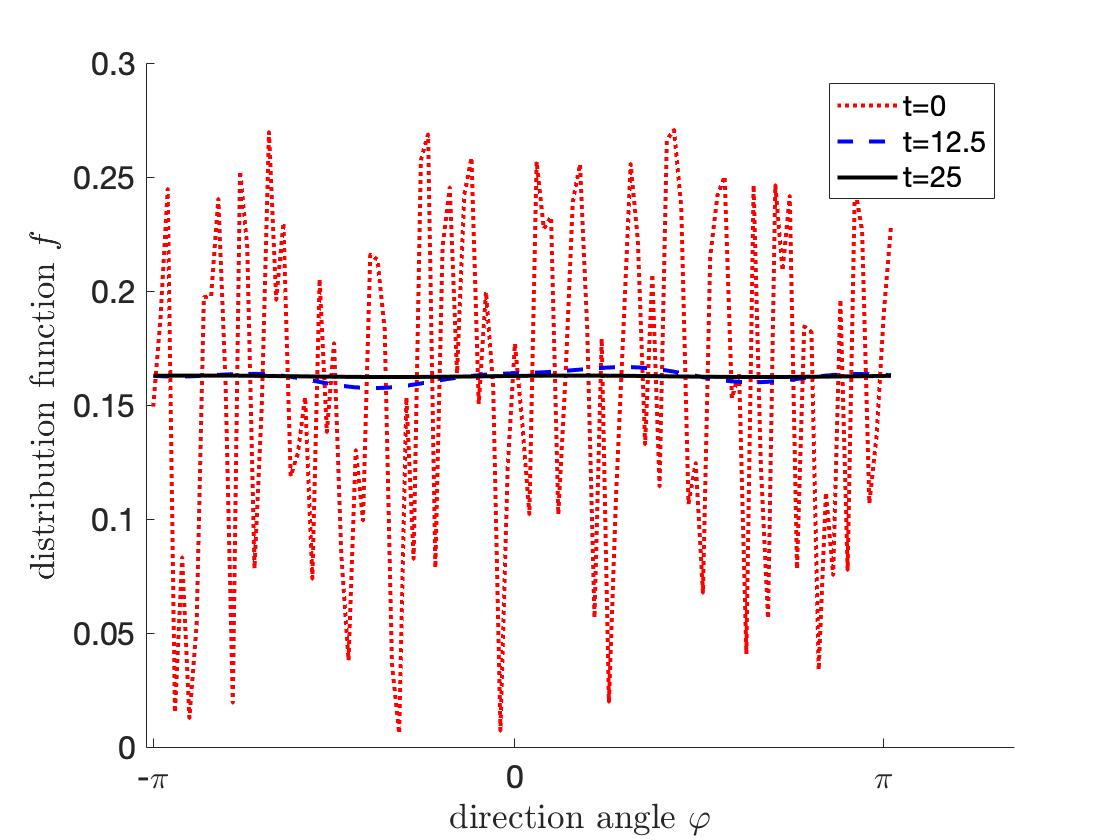}
		\end{subfigure}
		
\caption{Random perturbation of the constant equilibrium as initial conditions. \emph{Left:} With diffusivity smaller than the bifurcation value ($\mu = 0.02$) 
the solution converges to a nonuniform equilibrium with peaks at unpredictable positions. \emph{Right:} For $\mu = 0.03>\mua$ convergence to
the constant steady state $f_0=1/(2\pi)$ is observed.}
		\label{bif1}
\end{figure}
	
\begin{figure}[H]
		\centering
		\begin{subfigure}{0.48\textwidth} 
			\includegraphics[width=\textwidth]{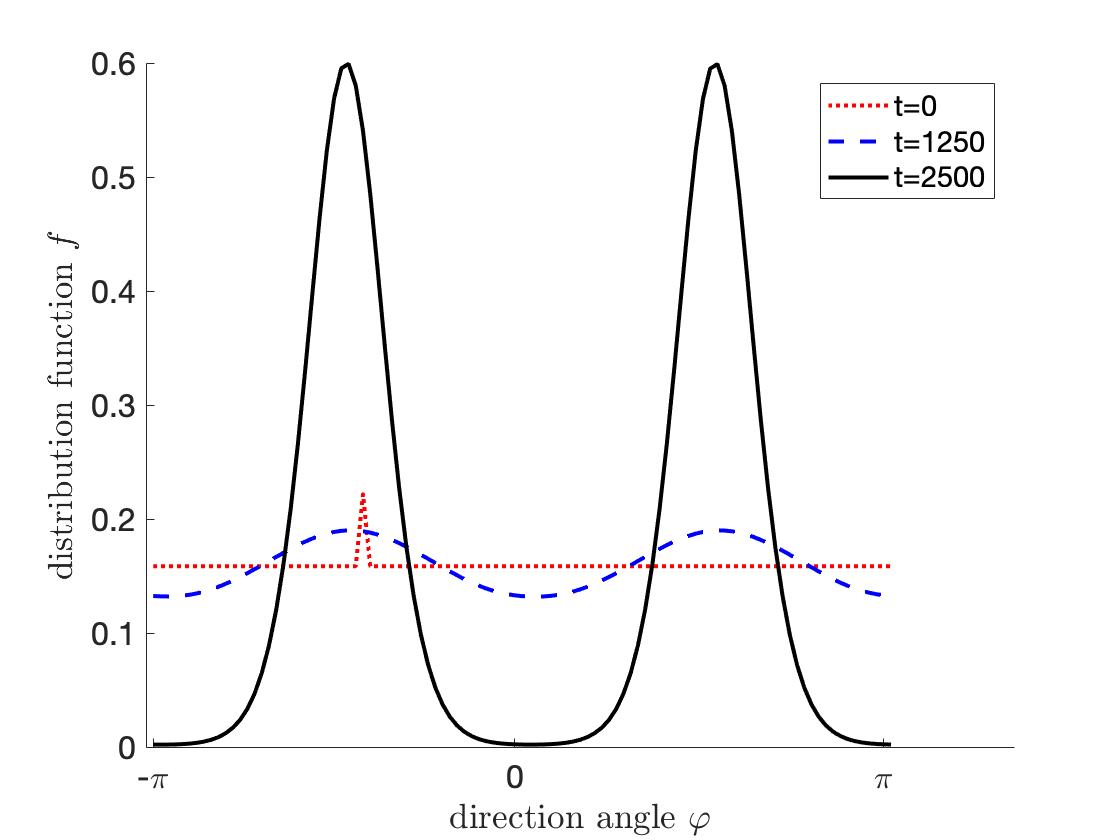}	
		\end{subfigure}
		\hspace{1em} 
		\begin{subfigure}{0.48\textwidth} 
			\includegraphics[width=\textwidth]{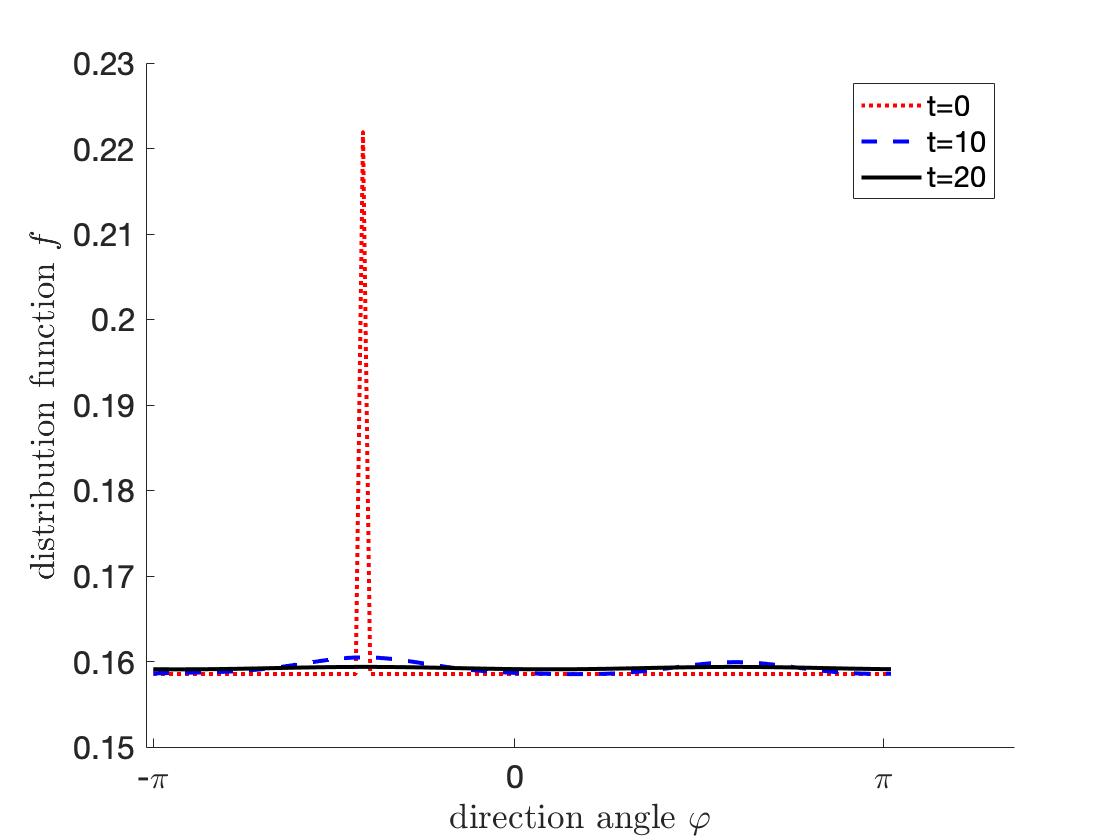}
		\end{subfigure}
		
\caption{Initial perturbation of the constant equilibrium at one grid point. \emph{Left:}  $\mu = 0.02<\mua$. \emph{Right:} $\mu = 0.03>\mua$.}
		\label{bif2}
\end{figure}

\paragraph{Simulations in the small diffusion regime:}
The remaining simulation results support the conjecture formulated in Section \ref{ex}. For the value $\mu=0.001$ of the diffusivity we always observe 
convergence to a nonuniform equilibrium with opposite peaks of equal mass. As expected the dynamics passes through metastable states with two
peaks of different masses, where the convergence to the final equilibrium becomes slower with decreasing values of $\mu$. This is the reason why a
rather moderate value has been chosen, where the concentration effect is not too strong.

The simulation shown in Figures \ref{small1} starts with two opposite plateaus of different mass, which are smoothed rather fast, before mass is
transferred by diffusion and the symmetrizing effect of the reversal operator to produce peaks of equal size. In Figure \ref{small3} the initial datum is nonzero only at two non-opposite points with different values.
In this case not only mass has to be transferred, but the peaks also move to produce the distance $\pi$ between them. Both figures also show the masses
in opposite half intervals converging towards each other. For $\mu>0$ we never observe unsymmetric equilibria, which exist and are stable for $\mu=0$
\cite{ourpaper}.

	\begin{figure}[H]
		\centering
		\hspace{1em} 
		\begin{subfigure}{0.48\textwidth} 
			\includegraphics[width=\textwidth]{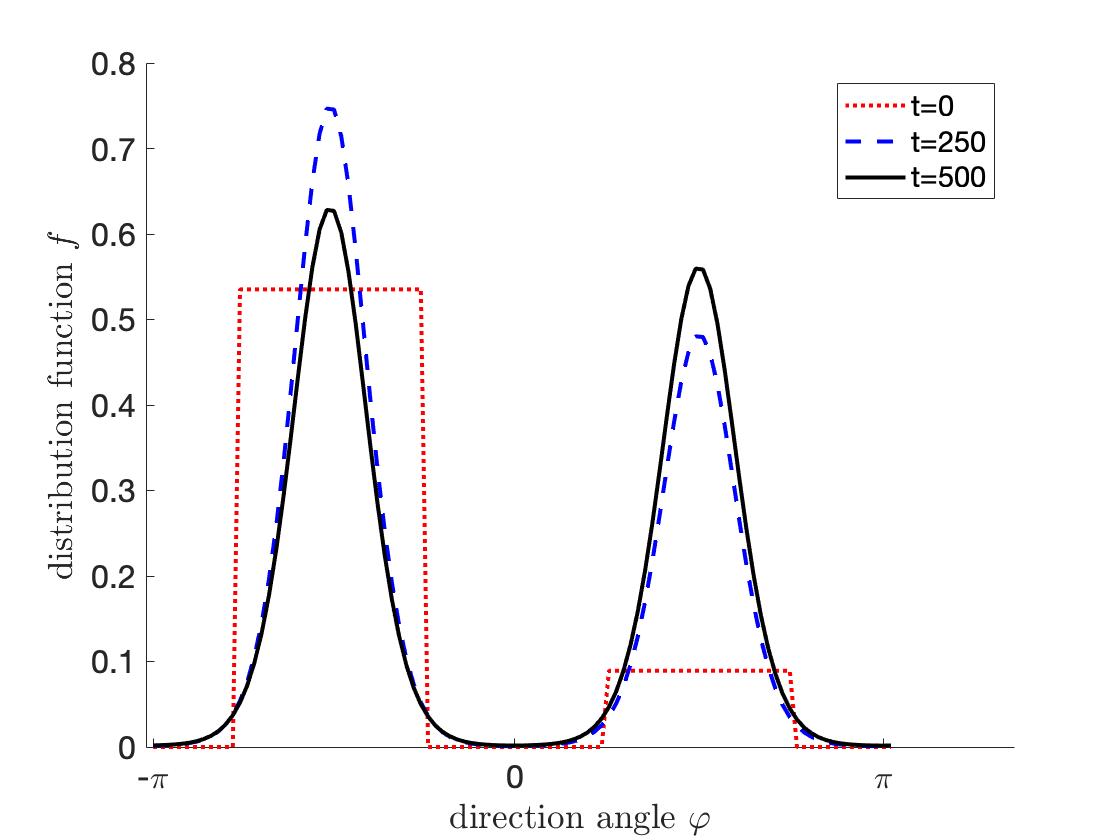}
		\end{subfigure}
		\begin{subfigure}{0.48\textwidth} 
			\includegraphics[width=\textwidth]{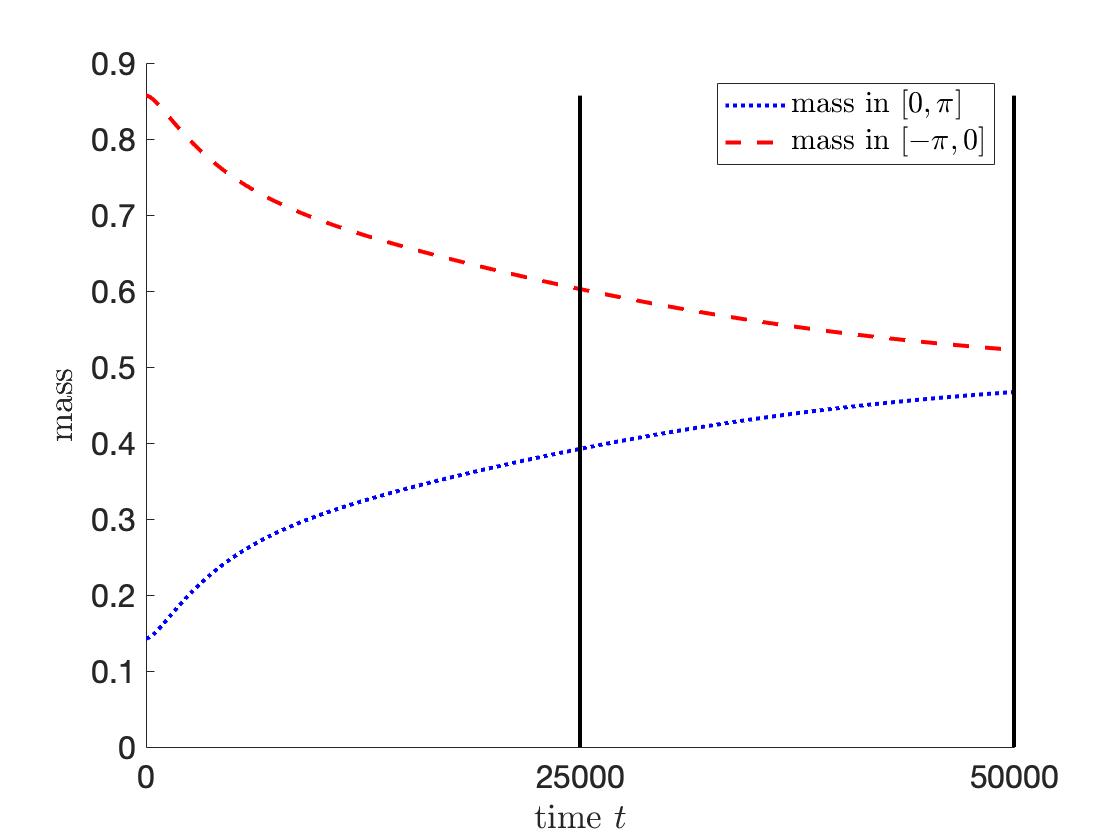}	
		\end{subfigure}
		
		\caption{Initial condition with equally distributed different masses in  $[-3\pi/4,-\pi/4]$ and $[\pi/4,3\pi/4]$, diffusion constant $\mu=0.001$. 
			\emph{Left:} Time evolution with smoothing of the plateaus, followed by redistribution of mass. \emph{Right:} Time evolution of the mass
			in $[-\pi,0]$ and in $[0,\pi]$.}
		\label{small1}
	\end{figure}
	
	%
	
	\begin{figure}[H]
		\centering
		\begin{subfigure}{0.48\textwidth} 
			\includegraphics[width=\textwidth]{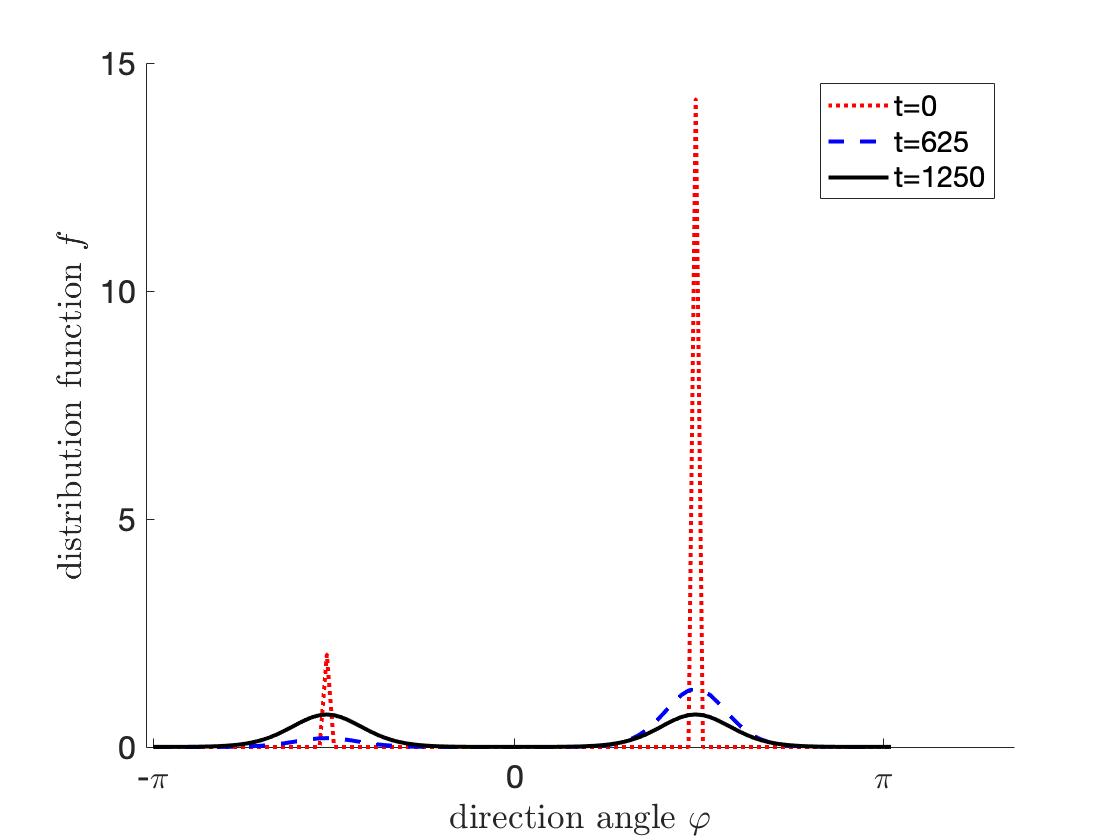}
		\end{subfigure}
		\begin{subfigure}{0.48\textwidth} 
			\includegraphics[width=\textwidth]{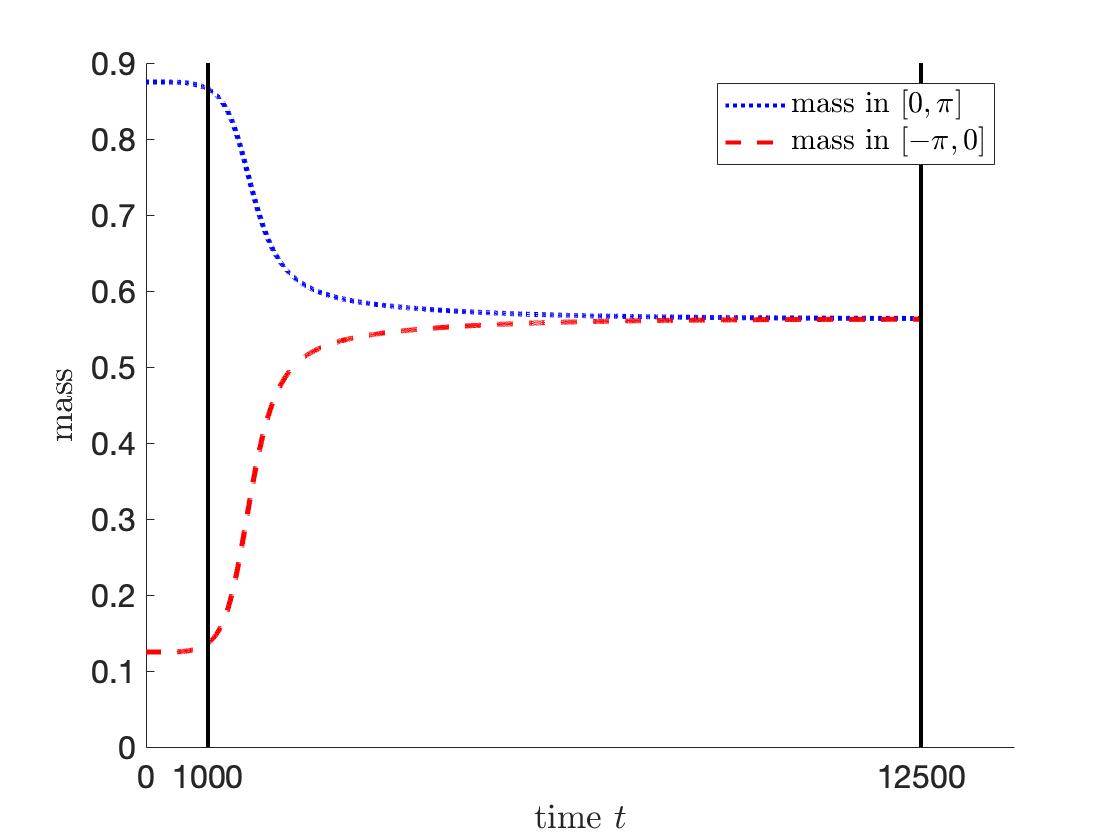}	
		\end{subfigure}
		
		\caption{Different masses initially concentrated at $-3\pi/4$ and at $\pi/2$, diffusion constant $\mu=0.001$. 
			\emph{Left:} Time evolution with smoothing and relocation of the peaks, followed by redistribution of mass. \emph{Right:} Time evolution 
			of the mass in $[-\pi,0]$ and in $[0,\pi]$.}
		\label{small3}
	\end{figure}
	
	\newpage
			
\end{document}